\newcommand{\Aut}{\mbox{Aut}\,}
\newcommand{\rad}{\mbox{rad}\,}
\newcommand{\id}{\mbox{id}\,}
\newcommand{\adj}{\mbox{adj}\,}
\newcommand{\Ric}{\mbox{Ric}\,}
\newcommand{\Supp}{\mbox{Supp}\,}
\newcommand{\im}{\text{Im\,}}
\newcommand{\barD}{\bar{\partial}}
\newcommand{\ddbar}{\partial \bar{\partial}}
\newcommand{\pd}{\partial}
\newtheorem{theorem}{Theorem}[section]
\newtheorem{definition}[theorem]{Definition}
\newtheorem{lemma}[theorem]{Lemma}
\newtheorem{proposition}[theorem]{Proposition}
\newtheorem{corollary}[theorem]{Corollary}
\newenvironment{remark}{\noindent{\bf Remark:}}{\vspace{3mm}}
\begin{document}

\title{On Effective Existence of Symmetric Differentials of Complex Hyperbolic Space Forms%\thanks{Grants or other notes
%about the article that should go on the front page should be
%placed here. General acknowledgments should be placed at the end of the article.}
}
%\subtitle{Do you have a subtitle?\\ If so, write it here}

\author{Kwok-Kin Wong\footnote{Department of Mathematics, The University of Hong Kong. Email: kkwongm@gmail.com}}
              
\date{}

\maketitle

\begin{abstract}
For a noncompact complex hyperbolic space form of finite volume $X=\mathbb{B}^n/\Gamma$, 
we consider the problem of producing symmetric differentials vanishing at infinity on the Mumford compactification $\overline{X}$ of $X$ similar to the case of producing cusp forms on hyperbolic Riemann surfaces. We introduce a natural geometric measurement which measures the size of the infinity $\overline{X}-X$ called {\bf canonical radius} of a cusp of $\Gamma$. The main result in the article is that there is a constant $r^*=r^*(n)$ depending only on the dimension, so that if the canonical radii of all cusps of $\Gamma$ are larger than $r^*$, then there exist symmetric differentials of $\overline{X}$ vanishing at infinity. As a corollary, we show that the cotangent bundle $T_{\overline{X}}$ is ample modulo the infinity if moreover the injectivity radius in the interior of $\overline{X}$ is larger than some constant $d^*=d^*(n)$ which depends only on the dimension.

\end{abstract}

\section{Introduction}
\label{intro}
Let $\Omega$ be an irreducible bounded symmetric domain and $\Gamma\subset \Aut_0(\Omega)$ be a torsion-free lattice so that $X=\Omega/\Gamma$ is a noncompact finite volume quotient with respect to the canonical K\"ahler-Einstein metric. Our main interest will be certain hyperbolic properties of the compactifications of $X$.  We say that a complex space $M$ is (Brody) hyperbolic if there exists no non-constant holomorphic map $f:\mathbb{C}\rightarrow M$. When $M$ is compact, being hyperbolic is equivalent  to that the Kobayashi pseudo distance function on $M$ is actually a distance function.

It is well-known, however, that $\overline{X}$ is in general not hyperbolic. For example, there is (noncompact) modular curve $X_k=\mathbb{H}/\Gamma_0(k)$ for some small $k\in \mathbb{N}$, such that $\overline{X_k}$ is $\mathbb{P}^1$ or elliptic curve. Nonetheless, suppose $X_k=\mathbb{H}/\Gamma_0(k)$ is a modular curve so that $\overline{X_k}$ is not hyperbolic. There is a sufficiently large $N\geq k$ such that $\overline{X_N}$ is hyperbolic. In this case, we say that $\overline{X_k} $ is hyperbolic up to cover. Such an idea has been generalized by Nadel to arithmetic quotients of bounded symmetric domain:
\begin{theorem}[cf. Theorem 0.1, Nadel \cite{Nadel1989}]
Let $\Omega$ be a bounded symmetric domain, $\Gamma\subset \Aut(\Omega)$ be an arithmetic lattice and $\overline{\Omega/\Gamma}$ be any compactification of $\Omega/\Gamma$. Then there  is a cover $\Omega/\Gamma' \rightarrow \Omega/\Gamma$ such that the image of any holomorphic curve $f:\mathbb{C}\rightarrow \overline{\Omega/\Gamma'}$ lies on $\overline{\Omega/\Gamma'}-\Omega/\Gamma'$.
\label{nadel}
\end{theorem}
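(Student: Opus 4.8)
\medskip
\noindent\textbf{Proof proposal.} The plan is to reduce Theorem~\ref{nadel} to an effective statement about symmetric differentials on a good compactification and then eliminate entire curves by the standard tautological argument. Set $V:=\Omega/\Gamma'$ and let $\overline{V}$ be a smooth toroidal compactification (the Mumford compactification in this setting), with reduced normal crossing boundary divisor $D:=\overline{V}\setminus V$, each component of which has semi-negative normal bundle. I will run the argument on $\overline{V}$ and at the end indicate how an arbitrary compactification $\overline{\Omega/\Gamma'}$ is reduced to this case. Since $V$ is covered by the bounded domain $\Omega$ it is Kobayashi hyperbolic, so there is no non-constant holomorphic map $\mathbb{C}\to V$. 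Hence a non-constant entire curve $f:\mathbb{C}\to\overline{V}$ whose image is not contained in $D$ must have $f^{-1}(D)=:S$ a non-empty discrete set with $f(\mathbb{C}\setminus S)\subset V$.

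The engine is the following claim, to be established for a sufficiently deep congruence subgroup $\Gamma'\subseteq\Gamma$: for some $m\geq1$ there are honest holomorphic symmetric $m$-differentials $\omega_1,\dots,\omega_N\in H^0\!\big(\overline{V},\,S^m\Omega^1_{\overline{V}}\otimes\mathcal{O}_{\overline{V}}(-D)\big)$, vanishing along the whole boundary, whose restrictions to $V$ generate $S^m\Omega^1_V$ at every point of $V$ (a form of ``$\Omega^1_{\overline{V}}$ ample modulo $D$''). Granting this, suppose Theorem~\ref{nadel} failed for such a $\Gamma'$. By Brody's lemma in its version for hyperbolicity modulo a subvariety there is a non-constant Brody curve $f:\mathbb{C}\to\overline{V}$ — one with $\|f'\|_{h_0}$ bounded for a fixed smooth Hermitian metric $h_0$ on the compact $\overline{V}$ — with $f(\mathbb{C})\not\subset D$. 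If $f^{-1}(D)=\varnothing$ then $f:\mathbb{C}\to V$ is non-constant, contradicting hyperbolicity of $V$; so $S=f^{-1}(D)$ is non-empty and discrete. Viewing each $\omega_j$ as a symmetric $m$-differential on $\overline{V}$ vanishing along $D$, we have $f^*\omega_j=h_j(z)\,dz^{\otimes m}$ with $h_j$ entire; since $|\omega_j|_{h_0}$ is bounded on $\overline{V}$ and $\|f'\|_{h_0}$ is bounded, $h_j$ is a bounded entire function, hence constant, and since $f(a)\in D$ for $a\in S$ we get $h_j(a)=0$, so $h_j\equiv0$. Thus for every $z\in\mathbb{C}\setminus S$ and every $j$, $\omega_j(f(z))\big((f'(z))^{\otimes m}\big)=0$; as the $\omega_j$ generate $S^m\Omega^1_V$ at $f(z)$ this forces $f'(z)=0$, so $f$ is constant — a contradiction. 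Hence $\overline{V}$ is hyperbolic modulo $D$, which is the conclusion of Theorem~\ref{nadel} for this compactification.

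The real content lies in the claim: producing symmetric differentials on the \emph{compactification} that vanish at infinity yet still generate $S^m\Omega^1_V$ over the whole interior. Over $V$ itself $S^m\Omega^1_V$ has an abundance of sections — $\Gamma'$-invariant holomorphic symmetric tensors on $\Omega$, built for example from powers of the Bergman kernel — but a priori these extend to $\overline{V}$ only with logarithmic poles along $D$; clearing the poles and producing genuine vanishing at the cusps requires automorphic forms decaying rapidly there, which one can only guarantee after passing to a deep enough congruence level. In the quantitative language of this paper this means forcing the canonical radii of all cusps past the threshold $r^{*}(n)$, and for the generation statement also forcing the interior injectivity radius past $d^{*}(n)$ — both achievable by going far enough up the principal congruence tower $\{\Gamma(N)\}$. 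Verifying that at that depth enough of the decaying differentials survive to generate $S^m\Omega^1_V$ everywhere on $V$ is exactly the effective existence problem addressed by this paper's main theorem and its corollary, and is the step I expect to be the main obstacle. Finally, for an arbitrary compactification one takes a common resolution $Y$ of it and $\overline{V}$, isomorphic to both over $V$, pulls the $\omega_j$ back to $Y$ (where they still vanish on $Y\setminus V$ and generate $S^m\Omega^1_V$ over $V$), runs the argument on $Y$, and transfers the conclusion back using properness and the hyperbolicity of $V$; the only delicate point there is that a holomorphic map from a punctured disk into $V$ may a priori have an essential singularity towards $D$, so this transfer is not purely formal. (There is also a more metric-theoretic route, closer to Nadel's original argument, analysing the Kobayashi metric of $V$ near the cusps; in either approach the deep cover is precisely what tames the boundary.)
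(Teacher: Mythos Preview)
The paper does not prove Theorem~\ref{nadel}. It is stated in the introduction as a result of Nadel~\cite{Nadel1989} and used only as motivation; no argument for it appears anywhere in the paper, so there is no ``paper's own proof'' to compare your proposal against. The paper does indicate how it understands Nadel's argument, namely as ``a consequence of the existence of pluricanonical sections vanishing at infinity'': Nadel works with sections of high powers of the canonical bundle vanishing to high order along the boundary, combined with an Ahlfors--Schwarz type curvature estimate, rather than with symmetric differentials and a Brody/Liouville argument as you do.

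On your proposal itself there are two genuine gaps. First, the scope does not match the statement: Theorem~\ref{nadel} concerns an arbitrary bounded symmetric domain $\Omega$, but the engine you invoke for the key claim --- enough sections of $S^m\Omega^1_{\overline{V}}\otimes\mathcal{O}_{\overline{V}}(-D)$ generating $S^m\Omega^1_V$ on all of $V$ --- is this paper's Main Theorem and its Corollary, which are proved only for $\Omega=\mathbb{B}^n$; neither the cusp description of \S\ref{mum} nor the curvature asymptotics of \S\ref{metric} carry over to higher rank. Second, the Brody step is not valid as stated. Starting from a nonconstant entire curve $g:\mathbb{C}\to\overline{V}$ with $g(\mathbb{C})\not\subset D$, Brody reparametrisation produces a Brody limit $f$, but nothing prevents $f(\mathbb{C})\subset D$: the boundary components here are abelian varieties, certainly not hyperbolic, so the ``version for hyperbolicity modulo a subvariety'' you appeal to does not hold. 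Your argument therefore shows only that every \emph{Brody} curve lands in $D$, which is strictly weaker than the theorem's conclusion that every \emph{entire} curve does. Closing this gap requires either showing that $V$ is hyperbolically embedded in $\overline{V}$, or replacing the Liouville step by a Nevanlinna-theoretic or Ahlfors--Schwarz argument applied directly to $g$ --- which is essentially what Nadel does.
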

Theorem \ref{nadel} has an important consequence on hyperbolicity of $\Omega/\Gamma$. For instance, if one takes the Satake-Baily-Borel compactification $X'$ of $X=\Omega/\Gamma$, then one concludes that $X'$ is hyperbolic up to cover since $X$ is compactified again by quotients of bounded symmetric domains. The hyperbolicity of the Mumford compactification $\overline{X}$, however, is still not known by our knowledge.

We are going to look at the possibly simplest situation in higher dimensions, where $\Omega=\mathbb{B}^n$ is the complex unit ball of dimension $n\geq 2$. Let $X=\mathbb{B}^n/\Gamma$ be quotient of the complex unit ball by a torsion-free lattice $\Gamma\subset \Aut(\mathbb{B}^n)$ of finite volume with respect to the canonical K\"ahler-Einstein metric. The noncompact manifold $X$ admits toroidal compactification $\overline{X}$ by \cite{AMRT} in case $\Gamma$ is arithmetic and by \cite{Mok2012} for non-arithmetic $\Gamma$. It is shown that $\overline{X}$ possesses many hyperbolic properties (see for example, \cite{BT2015a}, \cite{Cad2016}, \cite{DCDC2015}, \cite{Wang2015}).

It is well-known that a projective manifold with ample cotangent bundle is hyperbolic. However, the converse is false. For example, take a product of Riemann surfaces of genus $\geq 2$. For the situation $\overline{X}$ is the Mumford compactification of $X=\mathbb{B}^n/\Gamma$, we know that it is in general not hyperbolic and hence has a non-ample cotangent bundle. We will study a weaker notion of ampleness of cotangent bundle, namely, the {\bf ampleness modulo a divisor}. We say that a vector bundle $E\rightarrow M$ over a complex space is ample modulo a divisor $D\subset M$ if the global sections of the tautological line bundle $\mathcal{O}_{\mathbb{P}E^*}(1)$
\footnote{The notation $\mathbb{P}E$ is the projectivization of the bundle $E$ instead of the dual of $E$, which is different from literatures in algebraic geometry.}
induces an embedding of $\mathbb{P}E^*-\pi^{-1}(D)$ into some projective space, where $\pi:\mathbb{P}E^*\rightarrow M$ is the natural projection. To us, Nadel's theorem is a consequence of the existence of pluricanonical sections vanishing at infinity. In the same spirit, we are going to show that 
\begin{theorem}[Main Theorem]
Let $X=\mathbb{B}^n/\Gamma$ be a quotient of the complex unit ball $\mathbb{B}^n\subset \mathbb{C}^n$ ($n\geq 2$) by a torsion-free lattice $\Gamma\subset Aut(\mathbb{B}^n)$, which is not necessarily arithmetic, of finite volume with respect to the canonical K\"ahler-Einstein metric $g_X$ that induced from the normalized Bergman metric $g$ on $\mathbb{B}^n$ of constant holomorphic sectional curvature $-2$. 
Write $\overline{X}$ to be the Mumford compactification of $X$.
Let $b_1,\dots,b_k\in \partial \mathbb{B}^n$ be the cusps of $\Gamma$ with canonical radii $r_1\geq \dots \geq r_k$ so that $\Omega:=\coprod_{j=1}^{k} \Omega_{b_j}$ is a disjoint union of neighbourhoods in $\overline{X}$ corresponding to the cusps.
Denote by $\Omega'$ the complement of $\Omega$
and by $d(\Omega')$ the injectivity radius of $\Omega'$ with respect to $g$.  

Then there are constants $r^*=r^*(n)$ and $d^*=d^*(n)$  depending only on the dimension $n$ such that if $d(\Omega')\geq d^*$ and $r_1\geq \dots \geq r_k\geq r^*$, the cotangent bundle $T^*_{\overline{X}}$ is ample modulo $D=\overline{X}-X$.
\label{main}
\end{theorem}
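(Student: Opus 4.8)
By the definition of ampleness modulo a divisor it suffices to find an integer $m$ for which the degree-$m$ holomorphic symmetric differentials of $\overline{X}$ separate points and tangent directions of $\mathbb{P}T_{\overline{X}}-\pi^{-1}(D)\cong\mathbb{P}T_X$. We look for these inside $V_m:=H^0\!\big(\overline{X},\,S^{m}T^{*}_{\overline{X}}\otimes\mathcal{O}_{\overline{X}}(-D)\big)=H^0\!\big(\mathbb{P}T_{\overline{X}},\,\mathcal{L}_m\big)$, where $\mathcal{L}_m:=\mathcal{O}_{\mathbb{P}T_{\overline{X}}}(m)\otimes\pi^{*}\mathcal{O}_{\overline{X}}(-D)$; since $\mathcal{O}_{\overline{X}}(-D)$ is canonically trivialized over $X$, separating $\mathbb{P}T_X$ by elements of $V_m$ is the same as separating it by sections of $\mathcal{O}_{\mathbb{P}T_{\overline{X}}}(m)$, i.e.\ by symmetric differentials. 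The plan is: (i) put a singular metric of positive curvature on $\mathcal{L}_m$, strictly positive and smooth over $\mathbb{P}T_X$, the extension across $\pi^{-1}(D)$ being where the canonical-radius hypothesis enters; (ii) use the injectivity-radius hypothesis to separate points and jets over the interior by an $L^2$-argument; (iii) handle the cusp neighbourhoods in the explicit toroidal model.

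For (i): the normalized Bergman metric $g$ has constant holomorphic sectional curvature $-2$, which makes $T^{*}_{\mathbb{B}^{n}}$ Griffiths positive and endows $\mathcal{O}_{\mathbb{P}T_X}(1)$ with a smooth metric whose curvature dominates $c(n)\,\omega_g$, where $\omega_g$ is the (complete) K\"ahler metric induced by $g$ on $\mathbb{P}T_X$ and $c(n)>0$ is explicit — the bound is uniform exactly because the curvature tensor of $g$ is pointwise constant. Tensoring with a fixed metric on $\mathcal{O}_{\overline{X}}(-D)$ gives a metric on $\mathcal{L}_m$ of curvature $\geq m\,c(n)\,\omega_g-C\,\omega_g\geq\varepsilon\,\omega_g$ over $\mathbb{P}T_X$ once $m$ is large. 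Near a boundary divisor $D_j$ the toroidal picture is explicit: a neighbourhood of $D_j$ is a finite quotient of a disk subbundle of the negative normal bundle $N_{D_j}$ over an $(n-1)$-dimensional abelian variety, $D_j$ being the zero section, and in the Siegel--Heisenberg coordinates adapted to the cusp $b_j$ the metric $g$ has a standard asymptotic expansion in the transverse coordinate. The canonical radius $r_j$ measures the scale on which this model is faithful, equivalently the positivity of the conormal bundle $\mathcal{O}_{\overline{X}}(-D)|_{D_j}=N_{D_j}^{-1}$, and the force of $r_1\geq\dots\geq r_k\geq r^{*}(n)$ is that the metric above then prolongs across $\pi^{-1}(D)$ to a singular metric on $\mathcal{L}_m$ (for a suitable $m=m(n)$) with positive curvature current and with trivial multiplier ideal over $\mathbb{P}T_X$ — so that, by the $L^2$-estimate of Andreotti--Vesentini and H\"ormander, $V_m$ is nonzero and in fact generates $\mathcal{L}_m$ over $\mathbb{P}T_X$. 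This already is the assertion that a lower bound on the canonical radii alone yields symmetric differentials vanishing at infinity.

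For (ii): let $p\neq q$ lie in $\mathbb{P}T_X$ with $\pi(p)\in\Omega'$ (or let a tangent vector at such a $p$ be prescribed). Pulling back to $\mathbb{B}^{n}$, the bound $d(\Omega')\geq d^{*}$ gives an embedded $g$-ball $B$ of radius $d^{*}$ about a lift of $\pi(p)$; in a unitary frame centred there a power of a suitable linear form (for instance $(dz_1)^{\otimes m}$) is a symmetric differential peaked at $p$ whose pointwise $g$-norm decays like $e^{-c(n)m\,d_g(\cdot\,,p)}$, because $g$ is Bergman. Cutting it off in $B$, descending, and solving $\bar\partial u=\bar\partial(\text{cut-off peak section})$ on $(\mathbb{P}T_X,\omega_g)$ against the weight of (i) augmented by a logarithmic pole at $q$ (resp.\ a second-order pole at $p$ for jets) — which the curvature $\geq m\,c(n)\,\omega_g$ dominates for $m$ large — gives $\|u\|_{L^2}\lesssim e^{-c(n)m\,d^{*}}$ with $u(q)=0$, while an interior elliptic estimate on $B$ bounds the $1$-jet of $u$ at $p$ by $\|u\|_{L^2}$. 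For $d^{*}=d^{*}(n)$ large the section $(\text{cut-off peak section})-u$ is nonzero at $p$ with prescribed $1$-jet and vanishes at $q$; by (i) it extends to an element of $V_m$. This is the standard mechanism by which a lower bound on the injectivity radius forces effective very ampleness.

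For (iii): where $\pi(p)$ lies in a cusp neighbourhood the injectivity radius is not bounded below, so one argues in the toroidal model: near $D_j$, $V_m$ contains all products of a positive power of the transverse coordinate with a polynomial in $dt/t$ and the pullback of a section of a high power of the polarization of $D_j$, and because $r_j\geq r^{*}(n)$ keeps the model faithful over all of $\pi^{-1}(D_j)$ these separate the transverse coordinate, the fibre directions and their jets, and — by Lefschetz's theorem for abelian varieties — the position on $D_j$; the residual cases, two points in distinct cusps or a cusp point against an interior one, combine this with the peak-section construction of (ii), using the positivity of the conormal bundle to restore the strict curvature lost at $\pi^{-1}(D)$. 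Patching these local constructions over a finite cover of $\overline{X}$ and folding the finitely many comparison constants into $r^{*}(n)$ and $d^{*}(n)$ produces an $m=m(n)$ for which $V_m$ separates points and tangent directions of $\mathbb{P}T_X$; hence $T^{*}_{\overline{X}}$ is ample modulo $D$. The principal obstacle is step (i): determining how large the canonical radius must be — equivalently how positive the conormal bundle of $D$ must be — for the Bergman-induced metric to prolong across $\pi^{-1}(D)$ with the stated curvature and multiplier-ideal behaviour and for $V_m$ to be globally generated over $\mathbb{P}T_X$, with all constants depending on $n$ alone; granting this, the effective separation in steps (ii) and (iii) is comparatively routine.
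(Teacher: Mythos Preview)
Your outline is structurally sound, but you rightly flag step (i) as the principal obstacle and do not supply its mechanism --- and that mechanism is the paper's main technical content. The paper does not prolong a metric across $\pi^{-1}(D)$, nor does the canonical radius enter as positivity of the conormal bundle. Instead the paper works entirely on the open complete manifold $\mathbb{P}T_X$, imposing vanishing along $D$ through a weight $\varphi=\gamma\chi\log\|w\|^2$ with the cutoff $\chi$ supported in an annulus $\{1-2\delta\le\|w\|<1-\delta\}$ inside the cusp neighbourhood. The basic estimate $\Theta(\mathcal{O}(\ell))+\operatorname{Ric}+\sqrt{-1}\partial\bar\partial\varphi\ge\varepsilon\,\omega$ has all its negativity on $\operatorname{Supp}\chi$, and the decisive computation (Proposition~5.1, using the curvature asymptotics of \S3) is that in the base directions $\Theta(\mathcal{O}(\ell))\sim 1/\delta^2$ as $\delta=1-\|w\|\to 0$, whereas the negativity of $\sqrt{-1}\partial\bar\partial\varphi$ is only $\sim 1/\delta$. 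A canonical radius close to $1$ forces $\operatorname{Supp}\chi$ into the region of small $\delta$, where the curvature dominates; this is what produces a threshold $r^*$ depending on $n$ alone. The local seed section near $T_b$ is also more specific than your description: from $s\in H^0(T_b,L^{-(m+1)})$ one builds the holomorphic function $f(z,w)=s(z)w^{m+1}$ on the total space of $L=\mathcal{N}_{T_b|\overline{X}}$ and takes $(df)^{\odot\ell}$, which vanishes to order exactly $\ell m$ along $T_b$ (Proposition~4.2).

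For the interior part (ii) your peak-section argument --- exponential pointwise decay of $(dz_1)^{\otimes m}$ in the Bergman norm, giving $\|u\|_{L^2}\lesssim e^{-c(n)m\,d^*}$ --- is a genuinely different route and should work. The paper instead recycles the same asymptotic trick: large $d(\Omega')$ lets one lift $p$ to $0\in\mathbb{B}^n$ inside a Euclidean ball $B(0,1-\rho)$ with $\rho$ small, put the cutoff for the interior weight $\psi=\chi\log|z|$ in $\{1-2\rho<|z|<1-\rho\}$, and again invoke curvature growth $\sim 1/\rho^2$ against weight-negativity $\sim 1/\rho$. Your Tian-style estimate is more portable; the paper's version has the virtue of unifying the cusp and interior cases under a single $1/\delta^2$-versus-$1/\delta$ mechanism.
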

We say that $\{X_k\}_{k=0}^{\infty}$ is a tower of coverings of $X$, if $X_0=X$ and for each $k\geq 0$, $X_{k+1}\rightarrow X_k$ is a finite covering such that $\pi_1(X_{k})\subset \pi_1(X)$ is a normal subgroup and such that $\cap_{i=0}^{\infty} \pi_1(X_k)=\{1\}$. For example, it is well-known that any arithmetic quotient of $\mathbb{B}^n$ has a tower of coverings. Since both injectivity radii of the interior part and the canonical radii corresponding to cusp neighbourhoods increase upon lifting along the tower of coverings, a direct consequence of Theorem \ref{main} is
\begin{corollary}
Suppose $\{X_k\}_{k=0}^{\infty}$ is a tower of coverings of $X$. Then there is $k\geq 0$ such that the cotangent bundle $T^*_{\overline{X_k}}$ is ample modulo $\overline{X_k}-X_k$. Hence if $\Gamma'$ is any finite index subgroup of $\pi_1(X_k)$, then the cotangent bundle of $\overline{X'}$, where $X'=\mathbb{B}^n/\Gamma'$, is ample modulo $\overline{X'}-X'$.
\label{main2cor}
\end{corollary}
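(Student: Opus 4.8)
The plan is to deduce the corollary from Theorem \ref{main} by locating, along the tower, a level $X_k$ that meets the two quantitative hypotheses of that theorem, and then by checking that these hypotheses are inherited by every further finite cover. Throughout write $\Gamma=\pi_1(X)$ and $\Gamma_k=\pi_1(X_k)$, so $\Gamma=\Gamma_0\supseteq\Gamma_1\supseteq\cdots$ are normal in $\Gamma$ with $\bigcap_{k\geq 0}\Gamma_k=\{1\}$.

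First I would make precise the two monotonicity/divergence statements mentioned just before the corollary. Taking the cusp neighbourhoods of $\overline{X_k}$ to be the connected components of the preimages of those of $\overline{X}$, the interior part $\Omega'_k$ is exactly the preimage of $\Omega'$. A geodesic loop of length $\ell$ based at a point of $\Omega'_k$ then descends to a geodesic loop of length $\ell$ in $\Omega'$, hence corresponds to a nontrivial element $\gamma\in\Gamma_k$ of translation length $\ell$ at a lift lying in a fixed compact set $K\subset\mathbb{B}^n$ with $\Gamma\cdot K\supseteq\pi^{-1}(\Omega')$. By proper discontinuity only finitely many $\gamma\in\Gamma$ have translation length $\leq 2d^*$ somewhere on $K$, and since the $\Gamma_k$ are nested with trivial intersection, for $k$ large none of these finitely many elements lies in $\Gamma_k$; therefore $d(\Omega'_k)\to\infty$. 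In the same way each cusp of $X_k$ lies over a cusp $b$ of $X$ with cusp (parabolic) subgroup equal to a finite-index subgroup of $\Gamma\cap\mathrm{Stab}(b)$ whose index tends to $\infty$; as the canonical radius of a cusp increases when the associated lattice in the corresponding Heisenberg group is replaced by a finite-index subgroup (the monotonicity noted in the text), the canonical radii of the cusps of $X_k$ also tend to $\infty$. Choosing $k$ with $d(\Omega'_k)\geq d^*(n)$ and with every canonical radius of $\Gamma_k$ at least $r^*(n)$, Theorem \ref{main} applies to $X_k$ and yields that $T^*_{\overline{X_k}}$ is ample modulo $\overline{X_k}-X_k$.

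For the last sentence I would let $\Gamma'\leq\Gamma_k$ be an arbitrary finite-index subgroup (not necessarily normal) and set $X'=\mathbb{B}^n/\Gamma'$, which carries a Mumford compactification $\overline{X'}$ by \cite{Mok2012}. The covering $X'\to X_k$ is a finite Riemannian covering, hence a local isometry, so a geodesic loop in the interior of $X'$ projects to one of the same length in the interior of $X_k$ and the interior injectivity radius does not decrease; likewise every cusp of $X'$ lies over a cusp of $X_k$ with cusp subgroup of finite index in the latter's, so by the same monotonicity the canonical radii do not decrease. Hence $X'$ still satisfies $d(\Omega')\geq d^*(n)$ and all canonical radii $\geq r^*(n)$, and Theorem \ref{main} applied to $X'$ finishes the proof. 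The one genuinely substantive point, which I expect to be the main obstacle, is the divergence of the cusp canonical radii along the tower together with their monotonicity under finite covers: this requires unwinding the definition of the canonical radius and verifying that it is governed by, and monotone in, the sparseness of the cusp lattice inside the associated Heisenberg group — the interior injectivity-radius statement being the standard Margulis-type argument sketched above.
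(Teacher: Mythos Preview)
Your overall strategy is the paper's: reduce to Theorem~\ref{main} by showing that both the interior injectivity radius and the cusp canonical radii grow along the tower, and then observe that both quantities are monotone under further finite covers. Your injectivity-radius argument is essentially identical to the paper's Margulis-type argument.

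There is one genuine slip in your cusp discussion. By Definition~\ref{depth} the canonical radius is $r_b=e^{-2\pi u_b/\tau}\in(0,1)$, so it cannot ``tend to $\infty$'' --- what one must show is that $r_k\to 1$. The paper does this by tracking the two ingredients of the formula separately. The generator $\tau_k>0$ of the one-parameter group $\Gamma_k\cap[U_b,U_b]$ is nondecreasing in $k$ (passing to a subgroup of a cyclic group), and since $\bigcap_k\Gamma_k=\{1\}$ one gets $\tau_k\to\infty$. Independently, the canonical height $u_{b_k}$ is the smallest $u$ for which the horoball $S^{(u)}$ is $\Gamma_{b,k}$-precisely-invariant, and this threshold can only drop when the group shrinks, so $u_{b_k}\searrow 0$. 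Plugging into $r_k=e^{-2\pi u_{b_k}/\tau_k}$ gives $r_k\nearrow 1$. This is the ``unwinding'' you flagged as the main obstacle; once the formula for $r_b$ is in hand it is short, and your Heisenberg-index heuristic is not quite the right bookkeeping (index alone does not control $r_b$; one needs both $\tau$ and the height).

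For the final sentence about an arbitrary finite-index $\Gamma'\leq\Gamma_k$, your monotonicity argument is correct and in fact more explicit than the paper, which leaves this step implicit: under a further cover $\tau$ does not decrease and the canonical height does not increase, so $r_b$ does not decrease, and the interior injectivity radius does not decrease for the local-isometry reason you give. Hence Theorem~\ref{main} still applies to $X'$.
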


The proof of Theorem \ref{main} is done by methods of $L^2$-estimates. From our point of view, the proof boils down to the construction of holomorphic symmetric differentials vanishing at infinity, which can also be thought of as a generalization of cusps form in modular curves. As inspired by the works of Mumford \cite{Mum1977}, Nadel \cite{Nadel1989} and Bakker-Tsimerman \cite{BT2015a}, an important element for the existence of the desired symmetric differentials is to make sure the size of the lattice ``$\Gamma$ is sufficiently small" or the infinity ``$D$ is not too big". Hence we introduce the notion of {\bf canonical radius} of a cusp, which measures the size of the infinity corresponding to a cusp of $\Gamma$. The canonical radius is probably a well-known quantity among experts in hyperbolic geometry (see for example, \cite{Par1998}). 

Our method relies on certain asymptotic properties of the canonical metric $g_X$ on $X$ and the curvature $\Theta$. On the other hand, in the proof of the ampleness modulo infinity,  the injectivity radius will also play an essential role.
The key point is, in the verification of the basic estimates required for $L^2$-estimates, the curvature term is positive and of growth $1/\delta^2$ as $\delta\rightarrow 0$, where $\delta$ is the distance to the boundary. The only negativity coming from the weight function is of growth $1/\delta$ as $\delta\rightarrow 0$. Thus the basic estimates are verified if the canonical radii are large so that the support where the negativity occurs lie in region close enough to the boundary.
Another important ingredient is the construction of locally holomorphic symmetric differentials vanishing along the infinity by applying a technique of Grauert (see Proposition \ref{locdiff}).

Recently, there seems to be an interest in producing symmetric differentials for Hermitian locally symmetric spaces in relation to the fundamental group representation and Hodge theory, see for example \cite{Kli2013}, \cite{BK2013}. Especially in \cite{Kli2013} Theorem 1.6 and Theorem 1.26, where the effectiveness plays an important role. We hope that in the near future, such relations can be further explored in the context of complex analytic geometry.

The organization of the rest of the article is as follows. 
In \S\ref{mum}, we recall briefly the Mumford compactification of $X=\mathbb{B}^n/\Gamma$.
In \S\ref{metric}, several asymptotic properties of the Bergman metric and the induced canonical metric will be illustrated.
In \S\ref{constr}, the construction of symmetric differential vanishing at infinity will be done.
In \S\ref{amp}, the ampleness modulo infinity of the cotangent of $\overline{X}$ will be shown, as well as Corollary \ref{main2cor}.

%-----------------------------------------------------
\section{Descriptions of the Mumford Compactification of $X=\mathbb{B}^n/\Gamma$}\label{mum}
We first recall the Mumford compactification of $X=\mathbb{B}^n/\Gamma$. Our presentation follows \cite{Mok2012}.

Fix a cusp $b\in \partial \mathbb{B}^n$ of $\Gamma$. By composing a unitary transformation and the inverse Cayley transformation, we may let $b=q_\infty$ be the distinguish infinity of the Siegel upper half space $S$. 
Write $\Gamma_\infty=\Gamma_{q_\infty}=\Gamma\cap P_\infty$ (where $P_\infty:=P_{q_\infty}\subset \Aut(\mathbb{B}^n)$ is the isotropy subgroup of a point $q_\infty$). Let $U_\infty=\rad P_\infty$ be the unipotent radical of $P_\infty$. Note that $\Gamma\cap [U_\infty,U_\infty]$ is one dimensional and is generated by some $\tau>0$.
 
We let $\Omega^{(u)}_b\subset \overline{X}$ to be a horoball neighbourhood of the cusp $b$ of height $u>0$, which can be described more explicitly as follows.
For each $u>0$, let 
\[
S^{(u)}:=\{(z',z_n)\in \mathbb{C}^n \mid \im z_n> |z'|^2+u \}.
\]
Thus $u_1>u_2$ implies $S^{(u_1)}\subset S^{(u_2)}\subset S^{(0)}=:S$.
Consider the quotient of $S$ by $\Gamma\cap [U_\infty,U_\infty]$ given by the holomorphic covering map
\[
\begin{array}{cccc}
\Psi: &\mathbb{C}^{n-1}\times \mathbb{C} &\rightarrow & \mathbb{C}^{n-1}\times \mathbb{C}^* \\
& (z',z_n) &\mapsto & (z',e^{ \frac{ 2\pi \sqrt{-1}}{\tau} z_n}):=(w',w_n),
\end{array}
\]
where $\tau\in \mathbb{R}$ corresponds to the generator of $\Gamma\cap [U_\infty,U_\infty]$.
In the Mumford compactification, the cusp is compactified by taking interior closure of $\mathbb{C}^{n-1}\times \Delta^*\subset \mathbb{C}^{n-1}\times \Delta$.

Let $G:=\Psi(S), G^{(u)}:=\Psi(S^{(u)})$. For each $u>0$, the canonical projection $G^{(u)}\rightarrow \mathbb{C}^{n-1}$ realizes $G^{(u)}$ as the total space of a puncture disk bundle over $\mathbb{C}^{n-1}$.
Let 
\[
\begin{array}{c}
\hat{G}:=\{(w',w_n)\in \mathbb{C}^{n} \mid |w_n|^2< e^{\frac{-4\pi}{\tau}|w'|^2}\}, \\
\hat{G}^{(u)}:=\{(w',w_n)\in \mathbb{C}^{n} \mid |w_n|^2< e^{\frac{-4\pi}{\tau}u}\cdot e^{\frac{-4\pi}{\tau}|w'|^2}\}.
\end{array}
\]
The sets $\hat{G}$ and $\hat{G}^{(u)}$ are obtained by glueing the zero section $\mathbb{C}^{n-1}\times \{0\}$ to $G$ and $G^{(u)}$ respectively. 

One may show that $G=\Phi(S)\cong S/(\Gamma \cap[U_b,U_b])$. Hence there is a group homomorphism $\varphi:\Gamma\cap U_b\rightarrow \Aut(G)$ such that $\Phi\circ \nu=\varphi(\nu)\circ \Psi$ for any $\Gamma\cap U_b$. This gives rise to the identifications $S/(\Gamma\cap U_b)\cong G/\varphi(\Gamma\cap U_b))$ and $G^{(u)}\cong S^{(u)}/(\Gamma \cap [U_b,U_b])$.

The boundary divisor $T_b$ corresponding to the cusp $b$ is actually the compact complex torus $(\mathbb{C}^{n-1}\times \{0\})/ \Lambda_b$ for some lattice of translations $\Lambda_b$.
Moreover, 
\[
\Omega_b^{(u)}:=\hat{G}^{(u)}/\varphi(\Gamma\cap U_b)\supset G^{(u)}/\varphi(\Gamma\cap U_b)=\Omega_b^{(u)}-T_b
\]
and $\Omega_b^{(u)}$ contains $T_b$. We have a natural open embedding
\[
\Omega^{(u)}_b-T_b\hookrightarrow X=S/\Gamma.
\]
for $u$ sufficiently large.
\begin{definition}
We call the smallest $u_b>0$ so that $\Omega^{(u_b)}_b-T_b\hookrightarrow X$ is an open embedding the {\bf canonical height} of $b$.
The Euclidean radius $r_b=e^{-\frac{2\pi}{\tau}u_b}$ of the disk bundle $\Omega^{(u_b)}_b\rightarrow T_b$ would be called the {\bf canonical radius} of $b$.
\label{depth}
\end{definition}
For any large enough $u>0$, we may identify $\Omega^{(u)}$ with a tubular neighbourhood of the zero section of a negative holomorphic line bundle $L\rightarrow T_b$, whose construction goes as follows. Consider the trivial line bundle
\[
\mathbb{C}^{n-1}\times \mathbb{C} \rightarrow \mathbb{C}^{n-1}
\]
given by the coordinate projection. For each $w=(w',w_n)\in \mathbb{C}^{n-1}\times \mathbb{C}$, we define a Hermitian metric
\[
\mu(w,w):= e^{\frac{4\pi}{\tau}|w'|^2}\cdot |w_n|^2.
\]
Then 
\[
\Theta(\mathbb{C}^{n-1}\times \mathbb{C}, \mu)=-\sqrt{-1}\partial\barD \log \mu=-\frac{4\pi}{\tau}\sqrt{-1}\partial\barD |w'|^2<0.
\]
We may write
\[
\hat{G}^{(u)}=\{w\in \mathbb{C}^{n} \mid \mu(w,w)< e^{\frac{-4\pi}{\tau}u}\}.
\]
So $\hat{G}^{(u)}$ is a level set under the metric $\mu$.
Restrict the trivial line bundle $\mathbb{C}^{n-1}\times \mathbb{C}\rightarrow \mathbb{C}^{n-1}$ to $\hat{G}^{(u)}$ and take quotients, we obtain a line bundle $L\rightarrow T_b$ with the induced Hermitian metric $\overline{\mu}$ of negative curvature (hence showing that $T_b$ is an abelian variety). The tubular neighbourhood $\Omega^{(u)}_b=\hat{G}^{(u)}/\varphi(\Gamma\cap U_b)$ 
is just the set of vectors on $L$ with $\overline{\mu}$-length $< e^{\frac{-2\pi}{\tau}u}$. Under the metric $\overline{\mu}$, we may view $\Omega^{(u)}_b$ as the total space of a disk bundle with each fibre isometric to the disk $D_r$ of radius $0<r=e^{\frac{-2\pi}{\tau}u}<1$ over $T_b$. Then $\Omega^{(u)}_b-T_b$ is a family of puncture disks. 

We summarise some important features of the Mumford compactification as follows:
\begin{theorem}[cf. Mok \cite{Mok2012}]
Let $X=\mathbb{B}^n/\Gamma$ be a noncompact finite volume quotient by a torsion-free lattice.
\begin{enumerate}
\item Set theoretically, the Mumford compactification is a disjoint union
\[
\overline{X}=X\coprod (\coprod_b T_b),
\]
where $T_b$ is an abelian variety and the disjoint union $\coprod_b T_b$ is taken over the finitely many cusps of $\Gamma$.
\item Each $T_b$ can be blown-down to a cusp $b$,  so that the image of the blown-down of $\overline{X}$ is a normal projective variety $X'$, which is the Satake-Baily-Borel compactification of $X$. 
\item For each $T_b$, it admits a system of fundamental neighbourhoods $\{\Omega_b^{(u)}\}_{u\geq u_b}$ in $\overline{X}$. For some large enough $u_0\geq u_b$ (for every cusp $b$), the fundamental neighbourhoods of two different $T_b$'s can be taken to be disjoint. As a complex manifold, we have
\[
\overline{X}=X\coprod (\coprod_b \Omega_b^{(u)})/\sim, \quad \forall u\geq u_0,
\]
where $\sim$ identifies the same point that occurs in both $X$ and $\Omega_b^{(u)}$.
\item Each fundamental neighbourhood $\Omega^{(u)}_{b}$ is a level set of a norm induced by a Hermitian metric $\overline{\mu}$ of a negative line bundle $L$ over $T_b$. One may identify $\Omega^{(u)}_{b}$ as an open subset of the total space of the line bundle $L$, which is then identified to the normal bundle $\mathcal{N}_{T_b|\overline{X}}$ of $T_b$ in $\overline{X}$.
\item Each $\Omega^{(u)}_b\rightarrow T_b$ has a structure of disk bundle. Under the norm $\|\cdot \|$ induced by $\overline{\mu}$, each disk is of radius $r=e^{-\frac{2\pi}{\tau}u}$, where $\tau$ is the generator of $\Gamma\cap [U_b,U_b]$ (where $U_b= \rad P_b$ is the unipotent radical of $P_b$ and $P_b\subset \Aut(\mathbb{B}^n)$ is the isotropy subgroup of $b$).
\end{enumerate}
\end{theorem}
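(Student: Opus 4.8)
The plan is to \emph{assemble the global statement from the local model around a single cusp developed above}, which already carries all the analytic content; what remains is the combinatorics of how the finitely many cuspidal pieces glue to $X$ together with the identification of the resulting space with the two classical compactifications.

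I would first dispose of items (1), (4) and (5) simultaneously, since they are local. Fix a cusp $b$, pass to the Siegel model, and recall $G\cong S/(\Gamma\cap[U_b,U_b])$ sitting inside the total space of $\mathbb{C}^{n-1}\times\mathbb{C}\to\mathbb{C}^{n-1}$ with the Hermitian metric $\mu(w,w)=e^{\frac{4\pi}{\tau}|w'|^2}|w_n|^2$, so that $\hat G^{(u)}=\{\,\mu(w,w)<e^{-\frac{4\pi}{\tau}u}\,\}$ is a genuine $\mu$-level set. The residual group $\varphi(\Gamma\cap U_b)$ acts on $\mathbb{C}^{n-1}$ through a lattice $\Lambda_b$ of translations and lifts to automorphisms of the bundle preserving $\mu$; quotienting turns the zero section into the compact complex torus $T_b=(\mathbb{C}^{n-1}\times\{0\})/\Lambda_b$ and turns $\hat G^{(u)}$ into $\Omega_b^{(u)}$, the set of vectors of $\overline\mu$-length $<e^{-\frac{2\pi}{\tau}u}$ in the descended line bundle $L\to T_b$. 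Since
\[
\Theta(L,\overline\mu)=-\sqrt{-1}\,\partial\barD\log\mu=-\tfrac{4\pi}{\tau}\sqrt{-1}\,\partial\barD|w'|^2<0,
\]
$L$ is negative, hence $L^{-1}$ is positive and $T_b$ is projective, i.e.\ an abelian variety, which is (1). The realisation of $\Omega_b^{(u)}$ as a disk bundle over $T_b$ with fibre radius $r=e^{-\frac{2\pi}{\tau}u}$ in the $\overline\mu$-norm, and the identification of $L$ with $\mathcal N_{T_b|\overline X}$ (the normal bundle of the zero section inside the total space of $L$ being $L$ itself), are then read off directly from the construction, giving (4) and (5).

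For (3) I would combine the open embedding $\Omega_b^{(u)}-T_b\hookrightarrow X$, valid for $u$ at least the canonical height, with the fact that $\{\Omega_b^{(u)}\}_{u\ge u_b}$ is a decreasing family shrinking to $T_b$; this exhibits the claimed fundamental system. Disjointness, for $u$ above some common $u_0$, of the neighbourhoods attached to inequivalent cusps follows from reduction theory for $\Gamma$ (cf.\ \cite{AMRT}): there are finitely many cusps, and the horoballs $S^{(u)}$ can be shrunk until their $\Gamma$-orbits over distinct cusps become pairwise disjoint. Gluing $X$ to the disjoint $\Omega_b^{(u)}$ along $\Omega_b^{(u)}-T_b$ yields a complex manifold, which is compact because it is covered by the relatively compact truncation $X\setminus\bigcup_b\Omega_b^{(u')}$ (relatively compact since $X$ has finite volume) together with the relatively compact closures of the $\Omega_b^{(u)}$; this space is $\overline X$.

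Finally, (2) is Grauert's contraction criterion: each $T_b$ has negative normal bundle $L$ in $\overline X$, hence blows down to a normal point, and performing all the contractions at once gives a normal compact complex space $X'$ with underlying set $X\coprod(\coprod_b\{b\})$. That $X'$ is projective and equals the Satake--Baily--Borel compactification is checked by comparing it near each cusp with the $\Gamma$-quotient of the Baily--Borel minimal compactification of $\mathbb{B}^n$: the two have the same underlying set and compatible normal analytic structure, so they agree. The delicate points in the program are the local step --- verifying that the (Heisenberg-type) residual action $\varphi(\Gamma\cap U_b)$ genuinely descends to translations on $\mathbb{C}^{n-1}$ preserving $\mu$ and the bundle structure, which is what makes the quotient a disk bundle over a torus rather than something singular --- together with the precise matching of the Grauert contraction with the Satake--Baily--Borel space; the disjointness and compactness assertions, by contrast, are routine once reduction theory is in hand.
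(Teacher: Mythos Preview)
Your proposal is correct and aligns with the paper's treatment: the paper does not give a formal proof of this theorem but rather records it as a summary (``cf.\ Mok \cite{Mok2012}'') of the explicit local construction laid out in \S\ref{mum} preceding the statement --- the Siegel model, the map $\Psi$, the metric $\mu$ with $\Theta<0$, the quotient by $\varphi(\Gamma\cap U_b)$ producing the disk bundle over the abelian variety $T_b$ --- which is exactly the material you assemble for items (1), (4), (5). Your additions for item (2) (Grauert's contraction criterion) and the disjointness in item (3) (reduction theory from \cite{AMRT}) fill in steps the paper leaves to the cited reference, but the overall architecture is the same.
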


%%%%%%%%%%%%%%%%%%%%%%%%%%%%%%%%%%%%%%%%%%%%%%%%%%%%%%
\section{Asymptotic Behaviour of Canonical Metric}\label{metric}
\subsection{The Bergman Metric on $\mathbb{B}^n$}
The normalized Bergman metric $g=(g_{i\overline{j}})$ on $\mathbb{B}^n$ is a K\"ahler-Einstein metric with the K\"ahler form
\[
\begin{array}{rcl}
\omega
&=& \sqrt{-1}\pd\barD(- \log (1-|z|^2)) \\
&=& \displaystyle \frac{\sqrt{-1}}{(1-|z|^2)^2}\sum\limits_{i,j=1}^n \left[(1-|z|^2)\delta_{ij}+\overline{z}_iz_j \right]dz_i\wedge d\overline{z}_j,
\end{array}
\]
which is of constant holomorphic sectional curvature $-2$. To understand the asymptotic behaviour of $g$ as $z\rightarrow \partial \mathbb{B}^n$, by applying automorphisms, one may consider without loss of generality the points $z=(t,0,\dots,0)$, where $0<t<1$. We immediate see by direct calculation that:
\begin{lemma}[cf. Lemma 5, Mok \cite{Mok2012}]
Let $g=(g_{i\overline{j}})$ be the normalized Bergman metric on $\mathbb{B}^n$. Then 
\begin{equation}
 g_{i\overline{j}}\sim 
\begin{cases}
\frac{1}{\delta^2}, \quad i=j=1  \\
\frac{1}{\delta}, \quad i=j\geq 2 \\
0, \quad \text{ otherwise}
\end{cases}
\label{bergball}
\end{equation}
as $t\rightarrow 1$,  i.e., $\delta(t)=1-t\rightarrow 0$.
\end{lemma}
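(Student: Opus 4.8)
The plan is to prove the estimate by direct evaluation of the explicit expression for the Bergman metric along the radial segment $z=(t,0,\dots,0)$, $0<t<1$, that the statement singles out. A preliminary remark: since $g$ is invariant under the transitive isometric action of $\Aut(\mathbb{B}^n)$ and $\partial\mathbb{B}^n$ is homogeneous under the unitary subgroup, the asymptotics of $g$ near an arbitrary boundary point are governed by the asymptotics along such a segment, so restricting to $z=(t,0,\dots,0)$ costs nothing. Along this segment $|z|^2=t^2$ and
\[
1-|z|^2=1-t^2=(1-t)(1+t)=\delta\,(2-\delta),\qquad \delta=\delta(t):=1-t,
\]
so that $1-|z|^2$ and $\delta$ are of the same order as $\delta\to 0$, with $1-|z|^2\sim 2\delta$ and $(1-|z|^2)^2\sim 4\delta^2$.

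Next I would substitute into
\[
g_{i\overline{j}}=\frac{1}{(1-|z|^2)^2}\Big[(1-|z|^2)\,\delta_{ij}+\overline{z}_i z_j\Big].
\]
Because every coordinate other than the first vanishes, the term $\overline{z}_i z_j$ contributes only when $i=j=1$, where it equals $t^2$. Hence $g_{1\overline{1}}=\big((1-t^2)+t^2\big)/(1-t^2)^2=(1-t^2)^{-2}$, which is of order $\delta^{-2}$ by the comparison above; for $i=j\geq 2$ one gets $g_{i\overline{i}}=(1-t^2)/(1-t^2)^2=(1-t^2)^{-1}$, of order $\delta^{-1}$; and for $i\neq j$ the bracket is identically zero (the Kronecker delta vanishes and at least one of $z_i,z_j$ is zero), so $g_{i\overline{j}}=0$. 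Assembling these three cases yields precisely \eqref{bergball}.

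There is no genuine obstacle here: the lemma is essentially a transcription of the explicit metric formula evaluated in the normal direction, and the only points requiring minimal care are the bookkeeping of which matrix entries survive and the elementary equivalence $1-t^2\sim 2(1-t)$ as $t\to 1$. Should one want the sharp leading constants, namely $g_{1\overline{1}}\sim \tfrac14\,\delta^{-2}$ and $g_{i\overline{i}}\sim \tfrac12\,\delta^{-1}$ for $i\geq 2$, they drop out of the same computation; for the purposes of the paper only the orders of growth are needed, which is what the statement records.
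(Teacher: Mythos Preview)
Your proof is correct and follows exactly the approach indicated in the paper, which simply remarks that the lemma is seen ``by direct calculation'' from the explicit formula for $g_{i\overline{j}}$ at $z=(t,0,\dots,0)$. Your write-up supplies the details of that calculation faithfully.
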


\subsection{Canonical Metric on $X$ Near Infinity}
We now focus on points of $X$ which lie in a cusp neighbourhood $\Omega_b^{(u)}\supset T_b$. 
Let $z_1,\dots,z_n$ denote the coordinates of the upper half space presentation $S$ of $\mathbb{B}^n$ with K\"ahler form
\[
\omega_S=\sqrt{-1}\ddbar (-\log(\im z_n-|z'|^2)).
\]
Fix any $u\geq u_b$, where $u_b$ is the canonical height of the cusp $b\in \partial \mathbb{B}^n$. Let $w_1,\dots,w_n$ to be the local coordinates in $\Omega_b^{(u)}=\hat{G}^{(u)}/\varphi(\Gamma\cap U_b) $ and the divisor $T_b$ be defined by $\{w_n=0\}$. We may view $(w_1,\dots,w_n)\in \hat{G}^{(u)}$.
At a point $(w_1,\dots,w_n)=(w',w_n)\in \hat{G}^{(u)}$, we have
\[
w_1=z_1,\dots,w_{n-1}=z_{n-1},w_n=e^{\frac{2 \pi i}{\tau}\cdot z_n},
\]
where $\tau>0$ corresponds to the generator of some real one-parameter group.
Thus
\[
dw_1=dz_1,\dots, dw_{n-1}=dz_{n-1}, \frac{dw_n}{w_n}=\frac{2\pi \sqrt{-1}}{\tau} dz_n.
\]
Write $\|\cdot \|$ as the norm on $\Omega^{(u)}_b=\hat{G}^{(u)}/\varphi(\Gamma\cap U_b)$ given by the metric $\overline{\mu}$ (induced from the Hermitian metric of $L\rightarrow T_b$):
\begin{equation}
\| w \|:=e^{\frac{2\pi}{\tau}|w'|^2}\cdot |w_n|.
\label{normg}
\end{equation}
We have
\[
\log \|w\| =\frac{2\pi}{\tau} |w'|^2 +\log |w_n|=\frac{2\pi}{\tau}(|z'|^2-\im z_n).
\]
Then
\begin{equation}
\begin{array}{rcl}
\omega_{\Omega_b^{(u)}} &=& \sqrt{-1} \ddbar (-\log(-\log \|w\|)\\
&=& \displaystyle \frac{\sqrt{-1}\ddbar \log \|w\|}{-\log \|w\|}+ \frac{\sqrt{-1} \partial{(-\log \|w\|)}\wedge \barD{(-\log\|w\|)}}{(-\log\|w\|)^2} \\
&=& \displaystyle \frac{1+\log\|w\|}{\|w\|^2(-\log\|w\|)^2} \sqrt{-1}\partial\|w\|\wedge \barD\|w\|+\frac{1}{\|w\|(-\log \|w\|)} \sqrt{-1}\ddbar \|w\| \\
% &=& \displaystyle\sum_{i,j=1}^{n-1}\left[ \frac{2\pi}{\tau}\frac{1}{(-\log \|w\|)}\delta_{ij}- (\frac{2\pi}{\tau})^2\frac{1}{(-\log \|w\|)^2} w_i\bar{w}_j \right]\sqrt{-1} dw_i\wedge d\bar{w}_j \\
% &\, &+ \displaystyle\frac{1}{(-\log\|w\|)^2}\sum_{j=1}^{n-1}\left( \frac{2\pi}{\tau} \frac{\bar{w}_j}{\bar{w}_n}\sqrt{-1}dw_j\wedge d\bar{w}_n +\frac{2\pi}{\tau} \frac{w_j}{w_n}\sqrt{-1}dw_n\wedge d\bar{w}_j \right)\\
% &\,&+ \displaystyle\frac{1}{|w_n|^2(-\log\|w\|)^2}\sqrt{-1}dw_n\wedge d\bar{w}_n.\\
\end{array}
\label{formO}
\end{equation}
We may express the Hessian $\sqrt{-1}\ddbar\|w\|$ in terms of gradient square $\sqrt{-1}\partial \|w\|\wedge \barD\|w\|$ by noting that
\[
\begin{array}{rcl}
\ddbar\|w\|&=&\frac{2\pi}{\tau}\|w\|\ddbar|w'|^2+\frac{\partial \|w\|\wedge \barD\|w\|}{\|w\|}.
\end{array}
\]
Thus
\begin{eqnarray*}
\omega_{\Omega_b^{(u)}}&=& \displaystyle \frac{1}{\|w\|^2(-\log\|w\|)^2} \sqrt{-1}\partial\|w\|\wedge \barD\|w\|+\frac{2\pi}{\tau}\frac{1}{(-\log \|w\|)} \sqrt{-1}\ddbar|w'|^2 .
\end{eqnarray*}
For any $0<\|w\|\leq 1$, if follows from elementary calculus that $0\leq 1-\|w\|\leq -\log \|w\|$. Thus
\[
\frac{1}{(-\log\|w\|)^2}\leq \frac{1}{(1-\|w\|)^2}.
\]
On the other hand, observe that
\[
(\frac{1}{\|w\|^2}\partial\|w\|\wedge \barD\|w\|)^2=0,\quad (\ddbar|w'|^2)^n=0.
\]
We have
\[
\omega_{\Omega_b^{(u)}}^n
=\displaystyle (\frac{2\pi}{\tau})^{n-1}\frac{1}{\|w\|^2(-\log\|w\|)^{n+1}}(\sqrt{-1})^n\partial\|w\|\wedge \barD\|w\|\wedge (\ddbar|w'|^2)^{n-1} .
\]
Hence
\begin{lemma}[cf. Proposition 1, Mok \cite{Mok2012}]
Let $g_X=\{g_{\alpha\overline{\beta}}\}$ be the canonical K\"ahler-Einstein metric on $X$ that is induced by the normalized Bergman metric on $\mathbb{B}^n$. Let $\Omega_b\subset \{0\leq \|w\|\leq 1\}$ be a fundamental neighbourhood of the infinity $T_b=\{w_n=0\}$. Let $\delta(w):=1-\|w\|$ be the distance from a point $w\in \Omega_b$ to $T_b$.
\begin{enumerate}
\item When $\|w\|\rightarrow 0$,
\begin{equation}
\frac{C_1}{\|w\|^2(-\log \|w\|)^{n+1}}dV \leq d V_{g_X} \leq \frac{C_2}{\|w\|^2(-\log \|w\|)^{n+1}} dV,
\end{equation}
where $C_1,C_2>0$ are constants and $dV=\sqrt{-1}\ddbar |w|^2$ is the local Euclidean volume form near infinity.
\item When $\|w\|\rightarrow 1$, $g_{\alpha\overline{\beta}}$ is of growth $1/ \delta^2$.
\end{enumerate}
\end{lemma}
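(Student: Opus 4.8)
The two assertions are of different natures: (1) is a direct continuation of the computation carried out just above, whereas (2) transplants the boundary behaviour of the Bergman metric near $\partial\mathbb{B}^n$ through the covering $\Psi$. For (1), I would start from the expression for $\omega_{\Omega_b^{(u)}}^n$ obtained above, namely a positive constant times $\|w\|^{-2}(-\log\|w\|)^{-(n+1)}(\sqrt{-1})^n\,\pd\|w\|\wedge\barD\|w\|\wedge(\ddbar|w'|^2)^{n-1}$, together with the fact that $dV_{g_X}$ is a fixed positive multiple of $\omega_{\Omega_b^{(u)}}^n$, and evaluate the remaining wedge product. Using $\pd\|w\| = \|w\|\bigl(\tfrac{2\pi}{\tau}\pd|w'|^2+\tfrac12\,w_n^{-1}dw_n\bigr)$ and the fact that $(\ddbar|w'|^2)^{n-1}$ is a positive multiple of the top-degree form $dw_1\wedge d\overline{w}_1\wedge\cdots\wedge dw_{n-1}\wedge d\overline{w}_{n-1}$ in the variables $w_1,\dots,w_{n-1}$ alone, every term of $\pd\|w\|\wedge\barD\|w\|$ carrying a $dw_k$ or $d\overline{w}_k$ with $k<n$ is killed, leaving only $\tfrac14\|w\|^2|w_n|^{-2}\,dw_n\wedge d\overline{w}_n$. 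Thus, up to a positive dimensional constant, $(\sqrt{-1})^n\,\pd\|w\|\wedge\barD\|w\|\wedge(\ddbar|w'|^2)^{n-1}=\|w\|^2|w_n|^{-2}\,dV=e^{\frac{4\pi}{\tau}|w'|^2}\,dV$, so $dV_{g_X}$ equals a positive constant times $e^{\frac{4\pi}{\tau}|w'|^2}\|w\|^{-2}(-\log\|w\|)^{-(n+1)}\,dV$. The weight $e^{\frac{4\pi}{\tau}|w'|^2}$ is not bounded on all of $\hat G^{(u)}$, but $T_b$ is compact, so after restricting to a fundamental domain of the lattice acting on the $w'$-variables one has $|w'|$ bounded and $e^{\frac{4\pi}{\tau}|w'|^2}$ trapped between two positive constants; this gives the two-sided estimate in (1).

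For (2), the key observation is that $\|w\|\to 1$ corresponds to approaching the sphere $\partial\mathbb{B}^n$ rather than $T_b$: from $\log\|w\|=\frac{2\pi}{\tau}(|z'|^2-\im z_n)$ we get $-\log\|w\|=\frac{2\pi}{\tau}(\im z_n-|z'|^2)$, a positive multiple of a defining function of $\partial S$, and $-\log\|w\|\sim 1-\|w\|=\delta$ as $\|w\|\to 1$. On the locus where $\|w\|$ is close to $1$, within such a fundamental domain, $|w_n|=\|w\|\,e^{-\frac{2\pi}{\tau}|w'|^2}$ stays bounded away from $0$, so the covering $\Psi\colon(z',z_n)\mapsto(z',e^{\frac{2\pi\sqrt{-1}}{\tau}z_n})$ --- through which $g_X$ is obtained from the Bergman metric --- is, on that region, a biholomorphism with uniformly bounded derivative and inverse derivative. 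Transplanting the ball asymptotics \eqref{bergball} through $\Psi^{-1}$ (after the Cayley transform these read $g_{i\overline j}\sim(\im z_n-|z'|^2)^{-2}$ along $z_n$ and $g_{i\overline j}\sim(\im z_n-|z'|^2)^{-1}$ in the remaining directions), one concludes that in the $w$-coordinates $g_{\alpha\overline\beta}=O(\delta^{-2})$, with $g_{n\overline n}$ bounded below by a positive constant times $\delta^{-2}$; in fact a direct substitution yields $g_{n\overline n}=\tfrac14|w_n|^{-2}(-\log\|w\|)^{-2}$, manifestly of growth $\delta^{-2}$ as $\|w\|\to 1$. This proves (2).

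I expect no essential obstacle here: (1) is routine multilinear algebra and (2) is a change of variables. The only subtlety is that the comparison constants are legitimate only after restricting to a fundamental domain of $T_b$ --- so that the otherwise unbounded weight $e^{\frac{4\pi}{\tau}|w'|^2}$, respectively the otherwise possibly small $|w_n|$, is controlled --- and that ``growth $1/\delta^2$'' in (2) must be read as an $O(\delta^{-2})$ bound on all components of $g_{\alpha\overline\beta}$ together with a matching lower bound on the normal component $g_{n\overline n}$.
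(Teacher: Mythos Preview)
Your proposal is correct and follows the paper's own route: the paper's proof is precisely the computation of $\omega_{\Omega_b^{(u)}}$ and $\omega_{\Omega_b^{(u)}}^n$ displayed immediately before the lemma, from which both assertions are read off via $-\log\|w\|\sim 1-\|w\|=\delta$ as $\|w\|\to 1$. Your explicit fundamental-domain argument bounding $e^{\frac{4\pi}{\tau}|w'|^2}$ (equivalently, bounding $|w_n|$ away from $0$ when $\|w\|\to 1$) fills in the one step the paper leaves tacit.
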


\begin{remark}
The same two sided asymptotic behaviour of $g$ occurs in the Poincar\'e metric of the punctured complex unit disk of dimension $1$, where the norm $\|w\|$ is simply the Euclidean distance.
\end{remark}

For the dual metric $g^*$ on the cotangent bundle $T^*_{X}$, note that $g^*=((g_X)^{-1})^t=\frac{1}{\det g_X} (\adj g_X)^t$.
The norm $\|\cdot \|_{g^*}$ for a holomorphic section $s=s_1dw_1+\dots+s_n dw_n$ of $T^*_{\overline{X}}|_{\Omega_b^{(N)}}$ induced by $g^*$ has the asymptotic behaviour given by $\frac{1}{\det g_X}$ when $\|w\|\rightarrow 0$, i.e.,
\[
C_1 |s_n|^2 \|w\|^2(-\log\|w\|)^{n+1}\leq \|s\|^2_{g^*}\leq C_2 |s_n|^2 \|w\|^2(-\log\|w\|)^{n+1}
\]
for some constant $C_1,C_2>0$ when $\|w\|$ is sufficiently close to $0$.
Similarly, the norm of the symmetric product $\odot^k s$ has the asymptotic behaviour 
\[
C_1' |s_n|^{2k} \|w\|^{2k} (-\log\|w\|)^{k(n+1)}\leq \|\odot^k s\|^2_{\odot^k g^*}=\|s\|^{2k} _{g^*}\leq C_2' |s_n|^{2k} \|w\|^{2k} (-\log\|w\|)^{k(n+1)},
\]
for some constant $C_1',C_2'>0$ when $\|w\|$ is sufficiently close to $0$.

\subsection{Uniform Estimates of Curvature}

Consider the holomorphic tangent bundle $T_{\mathbb{B}^n}\rightarrow \mathbb{B}^n$.
Since the Bergman metric $g=(g_{i\overline{j}})$ on $\mathbb{B}^n$ is K\"ahler, the Hermitian connection $D$ with respect to $g$ agrees with the  connection. The curvature of $D$ is the tensor
\[
\Theta(T_{\mathbb{B}^n},g):=\sqrt{-1}\Theta_{\alpha \overline{\beta} i \overline{j}}e^{\alpha}\otimes \overline{e}_\beta dw^i\wedge d\overline{w}^j
\]
for any local coordinates $(w^i)$ and local holomorphic basis $(e_{\beta})$ of $T_{\mathbb{B}^n}$ and dual basis $(e^{\alpha})$ of $T^*_{\mathbb{B}^n}$. Explicitly,
\[
\Theta_{\alpha \overline{\beta} i \overline{j}}
=-\partial_i \partial_{\overline{j}} g_{\alpha \overline{\beta}}+g^{\mu \overline{\nu}} \partial_i g_{\alpha \overline{\nu}} \partial_{\overline{j}}g_{\mu \overline{\beta}},
\]
where $(g^{\gamma \overline{\delta}})$ is the conjugate inverse of $g=(g_{\alpha \overline{\beta}})$. 
Note that
$\Theta$ is independent of the choice of local holomorphic coordinates $(w^i)$.
The Euclidean coordinates at $0\in \mathbb{B}^n$ are actually complex geodesic coordinates for the Bergman metric on $\mathbb{B}^n$. If we normalize the K\"ahler form of the Bergman metric of $\mathbb{B}^n$, which is K\"ahler-Einstein, so that it has constant holomorphic sectional curvature $-2$, then it follows from homogeneity and the calculation at the point $0\in \mathbb{B}^n$ that
\begin{lemma}[cf. \cite{Mok1989}]
For the normalized Bergman metric on $\mathbb{B}^n$ with constant holomorphic sectional curvature $-2$, we have:
\[
\begin{array}{c}
-2\leq \Theta_{\alpha\overline{\alpha}\beta\overline{\beta}}\leq-1,  \\
\Ric_{i\overline{j}}=-(n+1)\delta_{ij}.
\end{array}
\]
\end{lemma}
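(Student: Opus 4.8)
The plan is to reduce everything to the single point $0 \in \mathbb{B}^n$ by homogeneity and then carry out an explicit fourth-order Taylor computation with the K\"ahler potential $\phi(z) = -\log(1-|z|^2)$. First I would note that $\Aut(\mathbb{B}^n)$ acts transitively on $\mathbb{B}^n$ by biholomorphic isometries of $g$, and that $\Theta$ and $\Ric$ are natural tensors, invariant under isometries. Hence it suffices to prove the two displayed identities at $0$: once they hold there, pulling back along an isometry carrying an arbitrary point to $0$ propagates them everywhere. This is the "homogeneity" invoked in the statement.

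At $0$ I would use that the Euclidean coordinates $z_1,\dots,z_n$ are complex geodesic coordinates for $g$. Indeed $g_{\alpha\bar\beta} = \partial_\alpha\partial_{\bar\beta}\phi$, and $\phi(z) = |z|^2 + \tfrac12|z|^4 + O(|z|^6)$ has no term of odd total degree, so $g_{\alpha\bar\beta}(0) = \delta_{\alpha\beta}$ while $\partial_i g_{\alpha\bar\beta}(0) = \partial_i\partial_\alpha\partial_{\bar\beta}\phi(0) = 0$ (the coefficient of the degree-$3$ monomial $z_iz_\alpha\bar z_\beta$ in $\phi$ vanishes). Substituting into $\Theta_{\alpha\bar\beta i\bar j} = -\partial_i\partial_{\bar j}g_{\alpha\bar\beta} + g^{\mu\bar\nu}\partial_i g_{\alpha\bar\nu}\partial_{\bar j}g_{\mu\bar\beta}$ kills the quadratic term at $0$, leaving
\[
\Theta_{\alpha\bar\beta i\bar j}(0) = -\,\partial_i\partial_{\bar j}\partial_\alpha\partial_{\bar\beta}\phi(0),
\]
a fourth mixed derivative that only sees the quartic part $\tfrac12|z|^4 = \tfrac12\bigl(\textstyle\sum_k z_k\bar z_k\bigr)^2$ of $\phi$. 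A short combinatorial count of the coefficient of $z_\alpha z_i\bar z_\beta\bar z_j$ in $\bigl(\sum_k z_k\bar z_k\bigr)^2$, together with the elementary factorials coming from the derivatives, then gives
\[
\Theta_{\alpha\bar\beta i\bar j}(0) = -\bigl(\delta_{\alpha\beta}\delta_{ij} + \delta_{\alpha j}\delta_{i\beta}\bigr).
\]

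Both assertions drop out of this closed form. Reading off $\Theta_{\alpha\bar\alpha\beta\bar\beta}(0)$ gives $-(1 + \delta_{\alpha\beta})$, which equals $-2$ when $\alpha=\beta$ (consistent with holomorphic sectional curvature $-2$) and $-1$ when $\alpha\ne\beta$; hence $-2 \le \Theta_{\alpha\bar\alpha\beta\bar\beta} \le -1$. Contracting, $\Ric_{i\bar j}(0) = g^{\alpha\bar\beta}(0)\,\Theta_{\alpha\bar\beta i\bar j}(0) = \sum_\alpha \Theta_{\alpha\bar\alpha i\bar j}(0) = -\sum_\alpha\bigl(\delta_{ij} + \delta_{\alpha j}\delta_{i\alpha}\bigr) = -(n+1)\delta_{ij}$. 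By the homogeneity step these identities then hold at every point of $\mathbb{B}^n$, completing the proof.

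I do not expect a genuine obstacle here: the argument is a direct computation. The only points demanding care are (i) verifying that the Euclidean coordinates really are geodesic at $0$, so that the nonlinear term of the curvature formula drops out, and (ii) keeping the sign and normalization conventions straight, in particular that the above gives $\Theta_{\alpha\bar\alpha\alpha\bar\alpha}(0) = -2$ to match the stated constant holomorphic sectional curvature $-2$. As an alternative to the explicit Taylor bookkeeping one may instead polarize the holomorphic sectional curvature to obtain the general identity $\Theta_{\alpha\bar\beta i\bar j} = \tfrac{c}{2}\bigl(g_{\alpha\bar\beta}g_{i\bar j} + g_{\alpha\bar j}g_{i\bar\beta}\bigr)$ for a K\"ahler metric of constant holomorphic sectional curvature $c$, and then specialize $c=-2$; this is essentially the computation recorded in \cite{Mok1989}.
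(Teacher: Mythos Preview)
Your argument is correct and follows exactly the route the paper indicates: reduce to the origin by homogeneity, observe that the Euclidean coordinates are complex geodesic at $0$, and read off the curvature from the fourth-order term of the potential $-\log(1-|z|^2)$. The paper does not spell out the computation but refers to \cite{Mok1989}; your explicit derivation of $\Theta_{\alpha\bar\beta i\bar j}(0)=-(\delta_{\alpha\beta}\delta_{ij}+\delta_{\alpha j}\delta_{i\beta})$ and the subsequent contraction are precisely what is intended.
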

Denote by $\mathbb{P}T_X$ the projectivized tangent bundle of $X$ obtained by fiberwise projectivization of the holomorphic tangent bundle $(T_X,g_X)\rightarrow X$
 \footnote{Note that some author may denote $\mathbb{P}T_X$ by the projectivization of the dual bundle $T^*_X$ instead of $T_X$.}. 
 One may associate the tangent bundle $(T_X,g_X)$ to the tautological line bundle $(\mathcal{O}_{\mathbb{P}T_X}(-1), \hat{g})\rightarrow \mathbb{P}T_X$ by fibrewise Hopf blow-up process.
Each point $(x,[u])\in \mathbb{P}T_X$ is described by the $x$-coordinates corresponding to the direction of base manifold and $[u]$-coordinates corresponding to the direction of the fibre $\cong \mathbb{P}^{n-1}$. The metric $\hat{g}$ on each fibre is negative definite and is actually induced from the Fubini-Study metric on $\mathbb{P}^{n-1}$. We will normalize the Fubini-Study metric to be of  Ricci constant $n$. Equip $\mathbb{P}T_X$ with the K\"ahler form
\[
\omega_{\mathbb{P}T_X}=-c_1(\mathcal{O}_{\mathbb{P}T_X}(-1),\hat{g})
=\sqrt{-1}\ddbar \log g_0.
\]
where $c_1$ denotes the first Chern class. 
It follows that $\omega_{\mathbb{P}T_X}$ is positive when restricted respectively to the fibre direction and also the base direction. It is easily seen that
\begin{lemma}
The Ricci curvature of $(\mathbb{P}T_X, \omega_{\mathbb{P}T_X})$ satisfies
\begin{equation}
-(n+1)\leq \Ric_{i\overline{j}} \leq n.
\label{RicBound}
\end{equation}
\end{lemma}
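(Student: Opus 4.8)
The plan is to compute the Ricci form $\Ric(\omega_{\mathbb{P}T_X})=-\sqrt{-1}\ddbar\log\det(\omega_{\mathbb{P}T_X})$ in coordinates adapted to the projection $\pi\colon\mathbb{P}T_X\to X$, using the homogeneity of $\mathbb{B}^n$ to reduce to a single point, and then to read off the two-sided bound on its eigenvalues relative to $\omega_{\mathbb{P}T_X}$. First I would fix $(x_0,[v_0])\in\mathbb{P}T_X$, lift to $\mathbb{B}^n$, and apply an automorphism so that $x_0=0$ and $v_0=\pd/\pd z^1$, with $z^1,\dots,z^n$ the Euclidean (hence complex geodesic) coordinates, so $g_{i\overline{j}}(0)=\delta_{ij}$ and $\pd_k\pd_{\overline{l}}g_{i\overline{j}}(0)=-\Theta_{i\overline{j}k\overline{l}}(0)$. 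On the fibre $\mathbb{P}^{n-1}$ I would take affine coordinates $\zeta=(\zeta^1,\dots,\zeta^{n-1})$ centred at $[v_0]$, corresponding to the holomorphic frame $v(z,\zeta)=\pd/\pd z^1+\sum_a\zeta^a\,\pd/\pd z^{a+1}$ of $\mathcal{O}_{\mathbb{P}T_X}(-1)\subset\pi^*T_X$, so that $g_0=\sum_{i,j}g_{i\overline{j}}(z)v^i\overline{v^j}$. Expanding $\omega_{\mathbb{P}T_X}=\sqrt{-1}\ddbar\log g_0$ at the centre then gives a block form: the fibre--fibre block is the identity (the Fubini--Study metric of $\mathbb{P}^{n-1}$, of Ricci constant $n$), the base--fibre block vanishes, and the base--base block is the Hermitian form $\big(-\Theta_{1\overline{1}k\overline{l}}(0)\big)_{k,l}$, which by the preceding curvature Lemma is diagonal with eigenvalues in $[1,2]$.

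For the Ricci tensor I would invoke the canonical bundle formula: from the relative Euler sequence $0\to\mathcal{O}_{\mathbb{P}T_X}\to\pi^*T_X\otimes\mathcal{O}_{\mathbb{P}T_X}(1)\to T_{\mathbb{P}T_X/X}\to0$ together with $0\to T_{\mathbb{P}T_X/X}\to T_{\mathbb{P}T_X}\to\pi^*T_X\to0$, taking determinants yields $K_{\mathbb{P}T_X}\cong\mathcal{O}_{\mathbb{P}T_X}(-n)\otimes\pi^*K_X^{\otimes2}$. Then I would compare two metrics on $-K_{\mathbb{P}T_X}$: the one induced by the volume form $\omega_{\mathbb{P}T_X}^{2n-1}$, and the tensor product of the $\hat g$-induced metric on $\mathcal{O}_{\mathbb{P}T_X}(n)$ (curvature $n\,\omega_{\mathbb{P}T_X}$, by definition of $\omega_{\mathbb{P}T_X}$) with the $g_X$-induced metric on $\pi^*(-K_X)^{\otimes2}$ (curvature $2\,\pi^*\Ric(\omega_X)=-2(n+1)\,\pi^*\omega_X$). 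Their ratio is a positive function on $\mathbb{P}T_X$ built from $\Aut(\mathbb{B}^n)$-invariant data; since $\Aut(\mathbb{B}^n)$ acts transitively on $\mathbb{P}T_{\mathbb{B}^n}=\mathbb{B}^n\times\mathbb{P}^{n-1}$ (transitively on $\mathbb{B}^n$, with the isotropy $U(n)$ acting transitively on the fibre), this ratio is constant, whence
\[
\Ric(\omega_{\mathbb{P}T_X})=n\,\omega_{\mathbb{P}T_X}-2(n+1)\,\pi^*\omega_X .
\]
The bound $\Ric(\omega_{\mathbb{P}T_X})\le n\,\omega_{\mathbb{P}T_X}$ is immediate from $\pi^*\omega_X\ge0$, with equality exactly along the fibre directions (consistent with the Fubini--Study normalization), and the lower bound follows from the same identity by restricting to base directions and inserting the eigenvalue bound $[1,2]$ for the base--base block established in the first step.

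The hard part is the metric-comparison step: showing that the volume-form metric on $-K_{\mathbb{P}T_X}$ and the natural tensor-product metric agree up to a constant. This can be handled by the homogeneity argument above, or by trivializing the Euler sequence explicitly and checking that $\pd_{z^1}\wedge\dots\wedge\pd_{z^n}\wedge\pd_{\zeta^1}\wedge\dots\wedge\pd_{\zeta^{n-1}}$ corresponds to $s^{\otimes n}\otimes\pi^*(\pd_{z^1}\wedge\dots\wedge\pd_{z^n})^{\otimes2}$ under the isomorphism above with trivial transition cocycle. Alternatively one can bypass adjunction and compute $-\sqrt{-1}\ddbar\log\det(\omega_{\mathbb{P}T_X})$ directly to fourth order in the adapted coordinates; this is routine, but the bookkeeping of the base--fibre cross terms — which encode the holomorphic non-splitting of the relative tangent sequence — is the delicate point, and it is precisely there that the negativity of the holomorphic bisectional curvature of the Bergman metric, quantified by $-2\le\Theta_{\alpha\overline{\alpha}\beta\overline{\beta}}\le-1$, keeps the correction terms within the asserted range.
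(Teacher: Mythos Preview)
The paper offers no proof beyond the words ``it is easily seen,'' the implicit heuristic being that along the fibre $\omega_{\mathbb{P}T_X}$ restricts to the Fubini--Study metric on $\mathbb{P}^{n-1}$ (Ricci constant $n$), while the base carries the K\"ahler--Einstein metric of Ricci $-(n+1)$. Your route through the canonical bundle formula $-K_{\mathbb{P}T_X}\cong\mathcal{O}_{\mathbb{P}T_X}(n)\otimes\pi^*(-K_X)^{\otimes 2}$ and the transitivity of $\Aut(\mathbb{B}^n)$ on $\mathbb{P}T_{\mathbb{B}^n}$ is considerably more careful, and the identity you derive,
\[
\Ric(\omega_{\mathbb{P}T_X})=n\,\omega_{\mathbb{P}T_X}-2(n+1)\,\pi^*\omega_X,
\]
is correct.

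There is, however, a genuine slip in your last step. In the adapted frame at $(0,[\pd/\pd z^1])$, the base--base block of $\omega_{\mathbb{P}T_X}$ has eigenvalues $-\Theta_{1\overline{1}k\overline{k}}$, equal to $2$ in the direction $k=1$ and $1$ in the directions $k\neq 1$, while $\pi^*\omega_X$ is the identity there. Plugging into your identity, the eigenvalue of $\Ric(\omega_{\mathbb{P}T_X})$ relative to $\omega_{\mathbb{P}T_X}$ in a base direction $k\neq 1$ is
\[
n-2(n+1)\cdot\frac{1}{1}=-(n+2),
\]
not $-(n+1)$; in the direction $k=1$ one gets $n-2(n+1)\cdot\tfrac{1}{2}=-1$. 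So your own formula yields the sharp range $-(n+2)\le\Ric_{i\overline{j}}\le n$, and the lower bound $-(n+1)$ stated in the lemma does not actually follow from your argument (nor, as far as one can see, from the paper's heuristic, which conflates the Ricci of $\omega_X$ with the base component of the Ricci of $\omega_{\mathbb{P}T_X}$). This discrepancy is harmless for the paper's purposes --- only boundedness of the Ricci is ever used, and the sole numerical consequence would be to replace the threshold $\ell>n+1+\varepsilon$ by $\ell>n+2+\varepsilon$ in the basic estimate $(*_1)$ --- but you should either flag it or carry the correct constant through.
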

Consider the point $[u]\in \mathbb{P}^{n-1}$ in the standard chart $U_n=\{ [v^1,\dots,v^n]\in \mathbb{P}^{n-1}\mid v^n\neq 0\}$. In a neighbourhood of $(\xi,[u])$ in $\mathbb{P}T_X$, we may use the local coordinates $(\xi,[u])=(\xi^1,\dots,\xi^n,u^1,\dotsm u^{n-1})$ where $u^i=v^i/v^n$. For the line bundle
$\mathcal{O}_{\mathbb{P}T_X}(-1)\rightarrow \mathbb{P}T_X$, the holomorphic fibre coordinate is given by $\lambda=v^r$ in a neighbourhood of $[u]$.

If $e$ is a holomorphic basis of $\mathcal{O}_{\mathbb{P}T_X}(-1)$ and write $\hat{g}=g_0e^*\otimes e$, then we have
\[
g_0=\|e\|^2=g_{\alpha\overline\beta}u^\alpha\overline{u}^\beta= \sum_{\alpha,\beta=1}^{n-1} g_{\alpha \overline{\beta}} (\xi)u^\alpha\overline{u}^\beta+2Re \sum_{\alpha=1}^{n-1}g_{\alpha\overline{n}}(\xi)u^\alpha+g_{n\overline{n}}(\xi),
\]
since $u^n=1$.
The curvature of $\mathcal{O}_{\mathbb{P}T_X}(-1)$ is given by
\begin{equation}
 \Theta(\mathcal{O}_{\mathbb{P}T_X}(-1),\hat{g}) =c_1(\mathcal{O}_{\mathbb{P}T_X}(-1),\hat{g})
=-\sqrt{-1}\partial\barD \log g_0.
\end{equation}
Since $X$ and $\mathbb{P}T_X$ are homogeneous, a uniform estimate of the curvature can be obtained by the computation at one point.

Assume the fibre coordinate $\lambda$ above to be a special holomorphic fibre coordinate for $\mathcal{O}_{\mathbb{P}T_X}(-1)$ adapted to $\hat{g}$ at $(x,[u])$, which has coordinates $(\xi^1,\dots,\xi^n,0,\dots,0)$ , it is obtained by direct computation (cf. \cite{Mok1989}) that for any $\ell\in \mathbb{N}$,
\begin{equation}
\begin{array}{rcl}
\Theta(\mathcal{O}_{\mathbb{P}T_X}(\ell),(\hat{g}^*)^{\ell})(x,[u])
&=&\ell\sqrt{-1}\displaystyle \left(\sum_{\alpha=1}^{n-1} du^\alpha\wedge d\overline{u}^\alpha - \sum_{i,j=1}^{n} \Theta_{n\overline{n}i\overline{j}} d\xi^i \wedge d\overline{\xi}^j\right).
\end{array}
\label{curvsimp}
\end{equation}
It also implies that
the hyperplane section line bundle $(\mathcal{O}_{\mathbb{P}T_X}(1),\hat{g}^*)\rightarrow \mathbb{P}T_X$ is of positive curvature.

\subsubsection{Asymptotic Behaviour of Curvature}
To obtain our Main Theorem (Theorem \ref{main}), however, it is not enough to use the uniform estimates coming from the calculation under special coordinates at one point and homogeneity. We will need to consider the asymptotic behaviour of the canonical metric induced by the normalized Bergman metric. Thus it is necessary to use the full calculation of the curvature. First note that we may write
\begin{equation}
g=(g_{\alpha\overline{\beta}})=\frac{1}{(-\log\|w\|)^2}(h_{\alpha\overline{\beta}}),
\label{gasym}
\end{equation}
where
\[
(h_{\alpha\overline{\beta}})=\left(
\begin{array}{cc}
(\frac{2\pi}{\tau}(-\log\|w\|)\delta_{\alpha\beta}-(\frac{2\pi}{\tau})^2w_\alpha\overline{w}_\beta)  & \frac{2\pi}{\tau}\frac{\overline{w}_\beta}{\overline{w}_n}\\
\frac{2\pi}{\tau}\frac{w_\alpha}{w_n} & \frac{1}{|w_n|^2}
\end{array}
\right), \quad 1\leq \alpha,\beta \leq n-1.
\]
When $\|w\|\rightarrow 1$, each component $g_{\alpha\overline{\beta}}$ is of growth $1/ \delta^2$ where 
\[
\delta(w):=1-\|w\|
\] 
is the distance from to the boundary $\{\|w\|=1\}$. Then
\begin{equation}
\begin{array}{lll}
 \Theta(\mathcal{O}_{\mathbb{P}T_X}(-1),\hat{g}) &=&c_1(\mathcal{O}_{\mathbb{P}T_X}(-1),\hat{g}) \\
&=&-\sqrt{-1}\partial\barD \log g_0 \\
&=& -\frac{\sqrt{-1}}{g_0^2}[\left(g_0 \pd_i\pd_{\overline{j}} g_0-\pd_i g_0\pd_{\overline{j}}g_0\right) d\xi^i\wedge d\overline{\xi}^j
+  \left(g_0 \pd_i\pd_{\overline{\beta}} g_0-\pd_i g_0\pd_{\overline{\beta}}g_0\right) d\xi^i\wedge d\overline{u}^\beta\\
& &+  \left(g_0 \pd_i\pd_{\overline{\alpha}} g_0-\pd_\alpha g_0\pd_{\overline{j}}g_0\right) du^\alpha\wedge d\overline{\xi}^j
+  \left(\pd_\alpha\pd_{\overline{\beta}} g_0-\pd_\alpha g_0\pd_{\overline{\beta}}g_0\right) du^\alpha\wedge d\overline{u}^\beta],
\end{array}
\label{curv}
\end{equation}
where $1\leq i,j,\alpha,\beta \leq n$. The numerator of the R.H.S. of Equation (\ref{curv}) is $-\sqrt{-1}$ times
\begin{equation}
\begin{array}{lll}
& &g_0\partial\barD g_0-\partial g_0\wedge \barD g_0 \\
 &=& u^\alpha\overline{u}^\beta u^{\gamma}\overline{u}^\delta \left( g_{\gamma \overline{\delta}}\pd_i\pd_{\overline{j}}g_{\alpha\overline{\beta}}-\pd_i g_{\alpha \overline{\beta}}\pd_{\overline{j}} g_{\gamma \overline{\delta}} \right) d\xi^i\wedge d\overline{\xi}^j 
 + u^\alpha u^{\gamma}\overline{u}^\delta \left( \pd_{\overline{j}} g_{\alpha\overline{\beta}}-\pd_{\overline{j}} g_{\gamma \overline{\delta}}\right) du^\alpha\wedge d\overline{\xi}^j \\
 &&+ \overline{u}^\beta u^{\gamma}\overline{u}^\delta \left(\pd_ig_{\alpha\overline{\beta}}-\pd_i g_{\alpha \overline{\delta}}\right) d\xi^i\wedge d\overline{u}^\beta
 + u^{\gamma}\overline{u}^\delta \left(g_{\alpha \overline{\beta}}g_{\gamma \overline{\delta}}-g_{\alpha\overline{\delta}}g_{\gamma \overline{\beta}}\right) du^\alpha\wedge d\overline{u}^\beta,
\end{array}
\label{numer}
\end{equation}
and the denominator is
\begin{equation}
g_0^2=g_{\alpha\overline{\beta}}g_{\gamma\overline{\delta}}u^\alpha \overline{u}^\beta u^\gamma\overline{u}^\delta,
\label{denom}
\end{equation}
where $1\leq \alpha,\beta,\gamma,\delta,i,j\leq n$. Since both the numerator and the denominator of the R.H.S. of the Equation (\ref{curv}) are polynomial of degree $4$ in $u^i,\overline{u}^j$'s and all possible combinations of the $u^i,\overline{u}^j$'s in the monomials appear once, we know that
\begin{lemma}
Each component $\Theta(\mathcal{O}_{\mathbb{P}T_X}(-1),\hat{g})_{i\overline{j}\alpha\overline{\beta}}$ is bounded as $|(u^1,\dots,u^{n-1},1)|\rightarrow \infty$ in any $u$-directions (i.e. the fibre directions).
\label{fibreest}
\end{lemma}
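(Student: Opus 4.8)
The plan is to read the assertion off directly from the explicit formulas \eqref{curv}--\eqref{denom} by a degree count in the fibre variables. Fix the base point $\xi$ once and for all; the statement concerns only the fibre variables $u=(u^1,\dots,u^{n-1})$, with $u^n\equiv 1$, so I will allow the implied constants to depend on $\xi$ (uniformity in $\xi$, which fails near the cusp, is not claimed at this stage). By \eqref{curv}, each component of $\Theta(\mathcal{O}_{\mathbb{P}T_X}(-1),\hat g)$ --- that is, the coefficient of $d\xi^i\wedge d\overline\xi^j$, of $d\xi^i\wedge d\overline u^\beta$, of $du^\alpha\wedge d\overline\xi^j$, or of $du^\alpha\wedge d\overline u^\beta$ --- is a quotient
\[
\frac{P(u,\overline u)}{g_0(u,\overline u)^2},
\]
where $P$ is the matching coefficient in the numerator \eqref{numer} and $g_0^2$ is the denominator \eqref{denom}. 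It therefore suffices to bound each such quotient as $|u|\to\infty$.

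For the denominator I would use that $g_0=\sum_{\alpha,\beta=1}^{n}g_{\alpha\overline\beta}(\xi)\,u^\alpha\overline u^\beta$ is a positive-definite Hermitian form evaluated at the nonzero vector $(u^1,\dots,u^{n-1},1)$, so $g_0\ge \lambda(\xi)\,(1+|u'|^2)$, where $\lambda(\xi)>0$ is the least eigenvalue of $(g_{\alpha\overline\beta}(\xi))$ and $|u'|^2=\sum_{\alpha=1}^{n-1}|u^\alpha|^2$; in particular $g_0$ never vanishes and $g_0^2\ge \lambda(\xi)^2|u'|^4$. For the numerators I would use that, as a Hermitian form, $g_0$ is bihomogeneous of bidegree $(1,1)$ in the full vector $(u^1,\dots,u^n)$ --- this is exactly the observation recorded just before the lemma, that every monomial carries one $u$ and one $\overline u$. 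Applying $\partial_{\xi^i}$ preserves the bidegree, while each $\partial_{u^\alpha}$ (resp.\ $\partial_{\overline u^\beta}$), $1\le\alpha,\beta\le n-1$, lowers the $u$-degree (resp.\ $\overline u$-degree) by one. Inspecting the four blocks of $g_0\,\partial\overline\partial g_0-\partial g_0\wedge\overline\partial g_0$ in \eqref{numer}, one finds that $P$ is a polynomial in $u',\overline u'$ of total degree at most $4$ in the $d\xi\wedge d\overline\xi$ block, at most $3$ in the two mixed blocks, and at most $2$ in the $du\wedge d\overline u$ block; hence $|P(u,\overline u)|\le C(\xi)(1+|u'|)^{d}$ with $d=4,3,2$ respectively. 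Dividing, each component is $O\bigl(|u'|^{\,d-4}\bigr)$ as $|u'|\to\infty$, which is bounded --- and in fact tends to $0$ in the mixed and fibre--fibre blocks, while the $d\xi\wedge d\overline\xi$ block, which records the base curvature, remains $O(1)$.

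There is no substantial obstacle here: the argument is bookkeeping. The one point that must be verified is that forming the complex Hessian of $\log g_0$ does not raise the $u$-degree of the numerator past that of $g_0^2$, which is precisely what the bidegree-$(1,1)$ homogeneity of $g_0$ in $(u^1,\dots,u^n)$ secures; combined with positive-definiteness of $(g_{\alpha\overline\beta}(\xi))$, which is what keeps $g_0^2$ comparable to $|u'|^4$ rather than degenerating along some fibre direction, this closes the estimate. I would also remark explicitly that the constants $C(\xi),\lambda(\xi)$ degenerate as $\xi$ approaches the boundary divisor --- because the metric and its derivatives do --- so the lemma yields boundedness only fibrewise; the genuinely delicate behaviour as $\xi\to\partial$ is handled separately.
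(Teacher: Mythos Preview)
Your proposal is correct and follows essentially the same route as the paper: a degree count on the explicit formulas \eqref{numer}--\eqref{denom}, observing that numerator and denominator are polynomials of matching total degree $4$ in $(u,\overline u)$, so the quotient stays bounded as $|u'|\to\infty$. Your write-up is in fact more careful than the paper's one-line justification, in that you invoke positive-definiteness of $(g_{\alpha\overline\beta}(\xi))$ to guarantee the denominator is genuinely comparable to $|u'|^4$ rather than degenerating, and you track the block-by-block degrees to see that the mixed and fibre--fibre components actually decay; the paper leaves both of these implicit.
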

\begin{remark}
For the purpose of studying asymptotic behaviour of the curvatures in some inequalities, it suffices to focus on the base directions.
\end{remark}

We will make use of two kinds of asymptotic behaviours of $\Theta(\mathcal{O}_{\mathbb{P}T_X}(-1),\hat{g})$. Let 
\[
\Omega:=\coprod_b \Omega_b
\] 
be a disjoint union of the canonical neighbourhoods of each component $T_b$ of the infinity , where $T_b$ corresponds to the cusps $b$ of $\Gamma$. Thus each $\Omega_b$ has a disk bundle structure over $T_b$ with radius less or equal than the canonical radius of $b$. We are going to view $\Omega-\coprod_b T_b \subset \mathbb{P}T_X$. We will also let
\[
\Omega':= \mathbb{P}T_X-\Omega
\]
to be the interior relatively compact part.

\subsubsection*{Curvature in the Interior}
Suppose $p\in \Omega'$. Then $p$ lifts to a point in a fundamental domain $\mathcal{F}\subset \mathbb{B}^n$ of $\Gamma$. We may assume the origin $0\in \mathcal{F}$. These is a metric ball neighbourhood of $p$ in $X$ of maximum radius measured with respect to the canonical metric $g_X$. The radius of the ball neighbourhood depends on the injectivity radius at the point $p$ with respect to $g_X$. 
Thus there exists $\delta>0$, which depends on the injectivity radius at $p$ with respect to $g_X$, such that $p\in B(0,1-\delta)\subset\mathcal{F}\subset \mathbb{B}^n$, where $B(z,r)$ denotes the Euclidean ball centred at $z$ with radius $r$. Now we have
\[
-\log\|w\|=\frac{2\pi}{\tau}(\im(\xi_n)-|\xi'|^2)=\frac{2\pi}{\tau}\frac{1-|z|^2}{|1-z_n|^2}=\frac{2\pi}{\tau}\frac{(1+|z|)(1-|z|)}{|1-z_n|^2}=:f(z)\delta(z),
\]
where $\delta(z):=1-|z|$ and $f(z)$ is non-vanishing. In view of Equation (\ref{gasym}), it then follows that
\[
\partial_i g_{\alpha\overline{\beta}}=\frac{1}{\delta(z)^3}\psi_{i,\alpha\overline{\beta}}, \quad 
\partial_{\overline{j}}g_{\alpha\overline{\beta}}=\frac{1}{\delta(z)^3}\psi_{\overline{j},\alpha\overline{\beta}}, \quad
\partial_i\partial_{\overline{j}}g_{\alpha\overline{\beta}}=\frac{1}{\delta(z)^4}\phi_{i\overline{j},\alpha\overline{\beta}},
\]
where $\psi_{i,\alpha\overline{\beta}},\psi_{\overline{j},\alpha\overline{\beta}},\phi_{i\overline{j},\alpha\overline{\beta}}$ are functions that are bounded as $|z|\rightarrow 1$.
By the explicit Equation (\ref{numer}) and (\ref{denom}), it follows that the growth of each component of the curvature is
\[
\Theta(\mathcal{O}_{\mathbb{P}T_X}(-1),\hat{g})_{i\overline{j}\alpha\overline{\beta}}\sim 1/\delta^2, \quad \text{when $|z|\rightarrow 1$},
\]
where $\delta(z)=1-|z|$.
\subsubsection*{Curvature Near Infinity}
Suppose $p\in \Omega_b\subset \Omega$ is a point in some fundamental cusp neighbourhood $\Omega_b=\{\|w\|\leq r_b\}$ (where $0<r_b<1$) of a component $T_b$ of the boundary divisor of the Mumford compactification $\overline{X}$. 
By similar calculations as in the above, we have the asymptotic behaviour 
\[
\Theta(\mathcal{O}_{\mathbb{P}T_X}(-1),\hat{g})_{i\overline{j}\alpha\overline{\beta}}\sim 1/\delta^2, \quad \text{when $\|w\|\rightarrow 1$},
\]
where $\delta(w)=1-\|w\|$. Geometrically, this situation corresponds to the approaching from cups neighbourhood to the interior. It is similar to the one dimensional analogue, i.e., the Poincar\'e metric for punctured unit disk. The analogous situation is the approaching of an interior point to the  boundary $\{|z|=1\}$.

We record the asymptotic behaviours of the curvature as follows:
\begin{lemma}
Let $\hat{g}$ be the Hermitian metric on $\mathbb{P}T_{X}$ induced from the K\"ahler metric on $\mathbb{P}T_X$ (which is induced from the canonical K\"ahler-Einstein metric on the base $X=\mathbb{B}^n/\Gamma$ and the Fubini-Study metric on each fibre $\mathbb{P}^{n-1}$). Let $\Omega:=\coprod_b \Omega_b$ and $\Omega':= \mathbb{P}T_X-\Omega$.
\begin{enumerate}
\item Suppose $p\in \Omega'$ is lifted to a point $z\in \mathbb{B}^n$. Then 
\begin{equation}
\Theta(\mathcal{O}_{\mathbb{P}T_X}(-1),\hat{g})_{i\overline{j}\alpha\overline{\beta}}\sim 1/\delta^2, \quad \text{when $|z|\rightarrow 1$},
\label{intasymp}
\end{equation}
where $\delta(z)=1-|z|$.
\item Suppose $w\in \Omega$ is contained in a fundamental neighbourhood $\Omega_b\subset \{0\leq \|w\|\leq 1\}$. Then
\begin{equation}
\Theta(\mathcal{O}_{\mathbb{P}T_X}(-1),\hat{g})_{i\overline{j}\alpha\overline{\beta}}\sim 1/\delta^2, \quad \text{when $\|w\|\rightarrow 1$},
\label{infasymp}
\end{equation}
where $\delta(w)=1-\|w\|$.
\end{enumerate}
\end{lemma}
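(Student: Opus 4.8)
\noindent The plan is to obtain both estimates from a single computation, substituting the explicit asymptotic form (\ref{gasym}) of the canonical metric $g=(g_{\alpha\overline\beta})$, together with the resulting expansions of its first and second derivatives in the base directions, into the curvature formula (\ref{curv})--(\ref{denom}) and reading off the order of growth in $\delta$. The two regimes in the statement differ only in how one relates the quantity $-\log\|w\|$ appearing in (\ref{gasym}) to the distance $\delta$ to the relevant boundary. In the interior case one uses the identity $-\log\|w\|=f(z)\,\delta(z)$ with $\delta(z)=1-|z|$ and $f$ smooth and non-vanishing, established above; in the cusp-neighbourhood case the elementary inequalities $1-\|w\|\le -\log\|w\|\le (1-\|w\|)/\|w\|$ (valid for $0<\|w\|\le 1$) give $-\log\|w\|=(1+o(1))\,\delta(w)$ with $\delta(w)=1-\|w\|$, while the entries of the matrix $(h_{\alpha\overline\beta})$ in (\ref{gasym}) and their base derivatives stay bounded there, since $|w_n|=\|w\|\,e^{-\frac{2\pi}{\tau}|w'|^2}$ is bounded away from $0$ when $\|w\|\to 1$ and $w'$ stays bounded. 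In either region, because each base differentiation of $(-\log\|w\|)^{-1}$ costs exactly one power of $\delta^{-1}$ while the $(h_{\alpha\overline\beta})$ and their derivatives remain bounded, one obtains
\[
g_{\alpha\overline\beta}=O(\delta^{-2}),\qquad \partial_i g_{\alpha\overline\beta}=O(\delta^{-3}),\qquad \partial_i\partial_{\overline j}g_{\alpha\overline\beta}=O(\delta^{-4}).
\]

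Feeding these into the numerator (\ref{numer}) of the curvature, every monomial is of the form $g_{\gamma\overline\delta}\,\partial_i\partial_{\overline j}g_{\alpha\overline\beta}$ or $\partial_i g_{\alpha\overline\beta}\,\partial_{\overline j}g_{\gamma\overline\delta}$ times a fixed polynomial in the fibre variables $u,\overline u$, hence of order $O(\delta^{-2}\cdot\delta^{-4})=O(\delta^{-6})$; the denominator (\ref{denom}) is $g_0^2$, with $g_0=g_{\alpha\overline\beta}u^\alpha\overline u^\beta>0$ of order $\delta^{-2}$ in the dominant normal direction. Dividing shows the base components of $\Theta(\mathcal{O}_{\mathbb{P}T_X}(-1),\hat g)$ are $O(\delta^{-2})$; together with Lemma \ref{fibreest} (boundedness of the fibre components) and the compactness of the $\mathbb{P}^{n-1}$-fibre the $O$-constants are uniform in $[u]$, and homogeneity of $X$ and $\mathbb{P}T_X$ makes them uniform in the base point. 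This gives the upper bounds in (\ref{intasymp}) and (\ref{infasymp}).

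\noindent The point requiring care -- and the main obstacle -- is \emph{sharpness}: a priori the leading $\delta^{-6}$ contributions in (\ref{numer}) could cancel against one another (or against the denominator for special directions $[u]$), which would lower the true order of growth and defeat the ``$\sim$''. I would settle this either directly, by isolating the $\delta^{-6}$-coefficient of the base part of (\ref{numer}) and observing that it is a fixed Hermitian form whose nondegeneracy is exactly the statement that the normalized Bergman curvature tensor $\Theta_{\alpha\overline\beta i\overline j}$ is nowhere zero -- which holds since it has constant holomorphic sectional curvature $-2$, hence equals the standard constant-curvature tensor in a $g$-orthonormal frame -- or invariantly, by passing to the complex geodesic coordinates for $g_X$ at the point, in which (\ref{curvsimp}) together with the curvature bounds (e.g. $-2\le\Theta_{\alpha\overline\alpha\beta\overline\beta}\le -1$ and (\ref{RicBound})) show that $\Theta(\mathcal{O}_{\mathbb{P}T_X}(-1),\hat g)$ has bounded and, in the base directions, nondegenerate coefficients, and then changing back to the $w$-coordinates, where the Jacobian is the square root of $g_X$, of size $\delta^{-1}$ in the dominant direction by (\ref{bergball}); this produces precisely two extra powers of $\delta^{-1}$ and hence matching lower bounds of order $\delta^{-2}$ for the extremal components. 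The near-infinity case (\ref{infasymp}) is then literally the same computation with $\|w\|$ playing the role of $|z|$ and $-\log\|w\|\asymp 1-\|w\|$ playing the role of $-\log\|w\|=f\cdot(1-|z|)$, reflecting the one-dimensional analogue of the punctured-disk Poincar\'e metric.
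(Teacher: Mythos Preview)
Your approach coincides with the paper's: the lemma is recorded immediately after the paper carries out exactly the computation you describe---writing $g=(-\log\|w\|)^{-2}(h_{\alpha\overline\beta})$ via (\ref{gasym}), noting that each base derivative of $g$ gains one power of $\delta^{-1}$ (so $\partial_i g_{\alpha\overline\beta}=\delta^{-3}\psi$, $\partial_i\partial_{\overline j}g_{\alpha\overline\beta}=\delta^{-4}\phi$ with bounded $\psi,\phi$), substituting into (\ref{numer})--(\ref{denom}), and dismissing part (2) with ``by similar calculations as in the above''. Your explicit attention to the lower bound (sharpness) actually goes beyond what the paper writes down; the invariant route you sketch through (\ref{curvsimp}) and the coordinate Jacobian of size $\delta^{-1}$ is a clean way to supply it.
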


%%%%%%%%%%%%%%%%%%%%%%%%%%%%%%%%%%%%%%%%%%%%%%%%%%%%%%
\section{The Construction of Symmetric Differentials Vanishing at Infinity}\label{constr}
In this section, we will construct symmetric differentials on $\overline{X}$ which vanishes at infinity $D=\overline{X}-X$. The construction is done by methods of $L^2$-estimates, which relies on the existence theorem:
\begin{theorem}[Andreotti-Vesentini \cite{AV1965} and H\"ormander \cite{Hor1965}]
Let $(M,g)$ be a complete K\"ahler manifold with K\"ahler form $\omega$ and Ricci form $\Ric(\omega)$. Let $(\mathcal{L},h)\rightarrow M$ be a Hermitian holomorphic line bundle with curvature form $\Theta(\mathcal{L},h)$. 
Suppose $\varphi$ is a smooth function on $M$ and $c$ is a positive continuous function on $X$ such that
\begin{equation}
\Theta(\mathcal{L},h)+\Ric(\omega)+\sqrt{-1}\partial\barD \varphi \geq c\omega 
\label{basicl2}
\end{equation}
everywhere on $M$. Let $f$ be a $\barD$-closed square integrable $\mathcal{L}$-valued $(0,1)$-form on $M$ such that $\int_M \frac{\| f\|^2 _g}{c} dV_g<\infty$.
Then there exists a section $u$ of $\mathcal{L}\rightarrow M$ solving the inhomogeneous problem $\barD u=f$ and also satisfying the estimate
\begin{equation}
\int_M \|u\|_g^2e^{-\varphi} dV_g\leq \int_M \frac{\| f\|^2 _g}{c} e^{-\varphi} dV_g<\infty.
\end{equation}
Furthermore, $u$ is smooth whenever $f$ is.
\label{hor}
\end{theorem}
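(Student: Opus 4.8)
The statement is the classical weighted $L^2$-existence theorem for $\barD$, so the plan is the standard H\"ormander--Andreotti--Vesentini scheme: reduce the solvability of $\barD u=f$ with the prescribed bound to one a priori inequality coming from the Bochner--Kodaira--Nakano identity, and then produce $u$ by Hilbert space duality.

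First I would fix the functional-analytic framework. For $q=0,1$, let $L^2_{0,q}(M,\mathcal{L})$ denote the Hilbert space of $\mathcal{L}$-valued $(0,q)$-forms square integrable for the weighted inner product $\langle\!\langle\cdot,\cdot\rangle\!\rangle:=\int_M\langle\cdot,\cdot\rangle_g\,e^{-\varphi}\,dV_g$, and write $\|\cdot\|$ for the associated global norm (reserving $\|\cdot\|_g$ for the pointwise norm). Let $\barD$ and its Hilbert space adjoint $\barD^*$ be the resulting closed, densely defined operators. Replacing the metric $h$ on $\mathcal{L}$ by $h\,e^{-\varphi}$ adds $\sqrt{-1}\partial\barD\varphi$ to its curvature, and under the usual identification of $\mathcal{L}$-valued $(0,q)$-forms with $(\mathcal{L}\otimes K_M^{-1})$-valued $(n,q)$-forms the anticanonical factor $K_M^{-1}$ contributes $\Ric(\omega)$; hence the effective Nakano curvature of the twisting bundle is exactly $\Theta(\mathcal{L},h)+\Ric(\omega)+\sqrt{-1}\partial\barD\varphi$, which by hypothesis (\ref{basicl2}) dominates $c\,\omega$.

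Next I would establish the a priori estimate
\begin{equation}
\int_M c\,\|v\|_g^2\,e^{-\varphi}\,dV_g\;\leq\;\|\barD v\|^2+\|\barD^* v\|^2
\label{aprioriplan}
\end{equation}
for every $\mathcal{L}$-valued $(0,1)$-form $v\in\mathrm{Dom}(\barD)\cap\mathrm{Dom}(\barD^*)$. For $v$ smooth with compact support this follows from the Bochner--Kodaira--Nakano identity by discarding the nonnegative $\partial$-side terms and inserting $\sqrt{-1}\Theta\geq c\,\omega$ for the twisting bundle (on $(n,1)$-forms the curvature operator $[\sqrt{-1}\Theta,\Lambda]$ then bounds below by $c$). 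To upgrade this to all of $\mathrm{Dom}(\barD)\cap\mathrm{Dom}(\barD^*)$ I would use completeness of $(M,g)$: a cut-off argument (the Andreotti--Vesentini / Gaffney density lemma, built from exhaustion functions whose gradients tend to $0$) shows that smooth compactly supported forms are dense in that common domain for the graph norm, so (\ref{aprioriplan}) passes to the limit.

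Finally I would run the duality argument. Given $v\in\mathrm{Dom}(\barD^*)$, split $v=v_1+v_2$ with $v_1\in\ker\barD$ and $v_2\perp\ker\barD$; since $\mathrm{Im}\,\barD\subset\ker\barD$ we get $v_2\in(\mathrm{Im}\,\barD)^\perp=\ker\barD^*$, so $\barD^* v=\barD^* v_1$, while $\langle\!\langle f,v_2\rangle\!\rangle=0$ because $f$ is $\barD$-closed. Therefore, using $\barD v_1=0$, Cauchy--Schwarz, the hypothesis $\int_M\|f\|_g^2\,c^{-1}\,e^{-\varphi}\,dV_g<\infty$, and (\ref{aprioriplan}),
\[
|\langle\!\langle f,v\rangle\!\rangle|=|\langle\!\langle f,v_1\rangle\!\rangle|\leq\Big(\int_M\tfrac{\|f\|_g^2}{c}\,e^{-\varphi}\,dV_g\Big)^{1/2}\Big(\int_M c\,\|v_1\|_g^2\,e^{-\varphi}\,dV_g\Big)^{1/2}\leq\Big(\int_M\tfrac{\|f\|_g^2}{c}\,e^{-\varphi}\,dV_g\Big)^{1/2}\,\|\barD^* v\| .
\]
Hence $\barD^* v\mapsto\langle\!\langle f,v\rangle\!\rangle$ is a well-defined bounded linear functional on $\mathrm{Im}\,\barD^*$; extending it by Hahn--Banach and applying the Riesz representation theorem yields a section $u$ of $\mathcal{L}$ with $\langle\!\langle u,\barD^* v\rangle\!\rangle=\langle\!\langle f,v\rangle\!\rangle$ for all $v\in\mathrm{Dom}(\barD^*)$ and $\|u\|^2\leq\int_M\|f\|_g^2\,c^{-1}\,e^{-\varphi}\,dV_g$. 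This says exactly that $u\in\mathrm{Dom}(\barD)$ and $\barD u=f$ in the weak sense, which is the claimed estimate. Smoothness of $u$ wherever $f$ is smooth follows from local elliptic regularity, since $\barD^*\barD u=\barD^* f$ is an elliptic equation with smooth right-hand side. The one genuinely delicate ingredient is the completeness-based density lemma that promotes (\ref{aprioriplan}) from compactly supported forms to the whole domain of $\barD^*$; once that is in hand the remainder is curvature bookkeeping and Hilbert space formalism.
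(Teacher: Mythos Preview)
Your sketch is the standard and correct H\"ormander--Andreotti--Vesentini argument. Note, however, that the paper does not give a proof of this theorem at all: it is stated with attribution to \cite{AV1965} and \cite{Hor1965} and then used as a black box in \S\ref{constr} and \S\ref{amp}. So there is nothing to compare against; your outline simply supplies the classical proof that the paper takes for granted.
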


\begin{remark}
When the weight function $\varphi$ is singular, the above existence theorem also holds provided that $\varphi$ can be approximated by a monotonically decreasing sequence $\{\varphi_\epsilon\}$ where each of $\varphi_\epsilon$ is smooth and plurisubharmonic. We will consider weight function of the form $\varphi\sim \log |z|^2$. The approximation can then be taken as $\varphi_\epsilon\sim\log (|z|^2+\epsilon)$ for $\epsilon\rightarrow 0$.
\end{remark}

We will carry on the construction by first assuming the basic estimates in Theorem \ref{hor}, leaving its justification to next section.

\subsection{The Construction Assuming the Basic Estimates}
Let $U\subset X \subset \mathbb{P}T_X$ be an open neighbourhood of the boundary divisor $T_b=\{w_n=0\}$ corresponding to a cusp $b$ of $\Gamma$. Let $\ell, n, \gamma$ to be respectively the degree of symmetric power of the cotangent bundle, the dimension of $X=\mathbb{B}^n/\Gamma$ and the degree of singularity in the weight function
\begin{equation}
\varphi:=\gamma \chi \log \|w\|^2,
\label{weight}
\end{equation}
where $\chi$ is a cutoff function near infinity to be specified explicitly later and $\gamma$ is a nonnegative integer.

Assume $\varepsilon>0$ such that on $\mathbb{P}T_X$, the basic estimates
\[
\Theta(\mathcal{O}_{\mathbb{P}T_{X}}(\ell),(\hat{g}^*)^{\ell})+\Ric(\omega_{\mathbb{P}T_X})+\sqrt{-1}\ddbar\varphi \geq \varepsilon \omega_{\mathbb{P}T_X}
\]
holds.

\begin{remark}
By our technique, we need to require $\ell\geq n+1+\varepsilon$.
\end{remark}
Suppose 
\[
\begin{array}{lc}
(\dagger): &\quad \exists \nu\in H^0(U-T_b,\mathcal{O}(\ell)|_{U-{T_b}})
\end{array}
\]
a nowhere zero holomorphic section so that it extends to and vanishes at $T_b$ of order exactly $m>0$.

Take a smooth cut-off function $\chi: \mathbb{P}T_X\rightarrow [0,1]$ such that for some neighbourhood $U_1$ of $(\mathbb{P}T_X-U)\cap X$ 
\[
\chi(p)=\begin{cases} 1, \text{if $p\in U$}\\ 0, \text{if $p\in U_1$.} \end{cases}
\]
We get a global smooth section $\eta:=\chi \nu\in \mathcal{C}^{\infty}_c(\mathbb{P}T_X,  \mathcal{O}_{\mathbb{P}T_X}(\ell))$.
Since $\barD{\eta}=\nu \barD{\chi}+\chi\barD{\nu}$. On $U$, $\chi\equiv 1$ and $\eta|_U=\nu \in \Gamma(U, \mathcal{O}_{\mathbb{P}T_X}(\ell)|_U)$, it follows that $\barD{\eta}\equiv 0$ . On $U_1$, $\chi\equiv 0$, which also implies that $\barD{\eta}\equiv 0$.
Thus $\barD{\eta}$ has compact support $\Supp(\barD{\eta})\Subset \mathbb{P}T_X-(U\cup U_1)$.
It follows that
\[
\int_{\mathbb{P}T_X} \|\barD{\eta}\|^2e^{-\varphi}\omega^{2n-1}_{\mathbb{P}T_X}<\infty.
\]
Now consider the following inhomogeneous partial differential equation of unknown $u$ on $\mathbb{P}T_X$:
\begin{equation}
\barD{u}=\barD\eta.
\label{pde}
\end{equation}
Hence by Theorem \ref{hor}, a solution $u$ to Equation $(\ref{pde})$ exists and it satisfies the estimates
\begin{equation}
\int_{\mathbb{P}T_X} \|u\|^2e^{-\varphi}\omega^{2n-1}_{\mathbb{P}T_X}
\leq \frac{1}{\varepsilon}\int_{\mathbb{P}T_X} \|\barD{\eta}\|^2e^{-\varphi}\omega^{2n-1}_{\mathbb{P}T_X}<\infty.
\label{estsect}
\end{equation}
Recall that the metric $(\hat{g}^*)^\ell$ is induced from the dual metric $\odot^\ell g^*$ on $S^\ell (T^*_X)$ and we have an isomorphism of sections $H^0(\mathbb{P}T_X, \mathcal{O}_{\mathbb{P}T_{X}}(\ell))\cong H^0(X,S^\ell (T^*_X))$. Thus we may identify $u$ to a section $u_0\in H^0(X,S^\ell (T^*_X))$ and $\eta$ to $\eta_0^\ell\in H^0(X,S^\ell (T^*_X))$.
Then Inequality (\ref{estsect}) implies that
\begin{equation}
\int_X \|u_0\|_{\odot^\ell g^*}^2 e^{-\varphi}dV_{g_X}<\infty.
\label{estsect0}
\end{equation}
Since the asymptotic behaviour of the metric $g_X$ when approaching to $T_b=\{w_n=0\}$ is donminanted by the $w_n$-coordinate, we may write without loss of generality that 
\[
u_0=f(dw_n)^{\odot \ell}, \quad \text{in a small neighbourhood $\Delta^n(\varepsilon)\cong \Omega_0\subset \Omega_b^{(N)}$ of $T_b$.}
\]
Now
\begin{eqnarray*}
\infty &>&\int_{\Omega_0} \|u_0\|_{\odot^\ell g^*}^2 e^{-\varphi}dV_{g_X} \\
&=&\int_{\Omega_0} \|f(dw_n)^{\odot \ell}\|_{\odot^\ell g^*}^2 e^{-\varphi}dV_{g_X} \\
%&\geq & const\cdot \int_{\Delta^n(\varepsilon)}|f|^{2} |w_n|^{2\ell}(-\log |w_n|)^{\ell(n+1)}\cdot \frac{1}{|w_n|^{2\gamma}} \cdot \frac{1}{|w_n|^2 (-\log |w_n|)^{n+1}}dV  \\
&\geq &  const\cdot \int_{\Delta^n(\varepsilon)}|f|^{2} \frac{1}{\|w\|^{2(-\ell+\gamma+1)}}(-\log \|w\|)^{(\ell-1)(n+1)} dV. \\
\end{eqnarray*}
In order to avoid $u_0$ to be the trivial solution, we want $f$ to vanish at $\{w_n=0\}$ to order at least $m+1$ for some $m\geq 0$. This is the same as requiring
\[
-\ell+\gamma+1\geq m+1 \Leftrightarrow \gamma\geq \ell+m.
\]
Let
\[
\sigma:=\eta-u. 
\]
Then $\barD{\sigma}=\barD{\eta}- \barD u\equiv 0$ on $\mathbb{P}T_X$, which implies that $\sigma\in H^0(\mathbb{P}T_X,\mathcal{O}_{\mathbb{P}T_X}(\ell))$ such that it vanishes at $T_b=\{w_n=0\}$ to order $m$.

\subsection{The Justification of $(\dagger)$}
\begin{proposition}
There exists an open neighbourhood $U\subset \overline{X} \subset \mathbb{P}T_{\overline{X}}$ of the boundary divisor $T_b=\{w_n=0\}$ corresponding to a cusp $b$ of $\Gamma$ such that there is a nowhere zero holomorphic section
\[
\nu\in H^0(U-T_b,\mathcal{O}_{\mathbb{P}T_{\overline{X}}}(\ell)|_{U-{T_b}})
\]
which extends and vanishes to order exactly $\ell m>0$ at $T_b$.
\label{locdiff}
\end{proposition}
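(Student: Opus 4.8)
The plan is to realize $\nu$ as a \emph{pure} symmetric power $(dh)^{\odot\ell}$ of the differential of a single holomorphic function $h$ defined near $T_b$, where $h$ is manufactured out of the negativity of the normal bundle of $T_b$ — this is the point at which ``a technique of Grauert'' enters. First I would fix the local model from \S\ref{mum}: for any $u\ge u_b$ the fundamental neighbourhood $U=\Omega_b^{(u)}$ is biholomorphic to the set of $\overline\mu$-small vectors in the total space of the negative line bundle $L=\mathcal N_{T_b|\overline X}\to T_b$, with projection $\pi\colon U\to T_b$, zero section $T_b=\{w_n=0\}$, and tautological section $w_n$ of $\pi^*L$ cutting out $T_b$ with multiplicity one. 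In particular the conormal bundle $L^*$ is \emph{ample} on the abelian variety $T_b$, and this ampleness is the relevant incarnation of Grauert's technique: it is exactly what allows one to push ample sections on $T_b$ down to holomorphic functions on $U$, since for $s\in H^0(T_b,(L^*)^{\,j})$ the automorphy factors of $(L^*)^{\,j}$ and of $w_n^{\,j}$ cancel and $(\pi^*s)\,w_n^{\,j}$ becomes a genuine holomorphic function on $U$, vanishing along $T_b$ to order exactly $j$ wherever $s\neq0$.

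Next I would choose the function. Fix $m\ge1$ large enough that $(L^*)^{\,m+1}$ is base-point-free on $T_b$ — any $m+1\ge2$ works, $L^*$ being ample on an abelian variety — and, by Bertini, pick $s\in H^0(T_b,(L^*)^{\,m+1})$ whose zero divisor is smooth, so that $s$ and $ds$ never vanish simultaneously on $T_b$. Put $h:=(\pi^*s)\,w_n^{\,m+1}$, a holomorphic function on $U$ vanishing along $T_b$ to order exactly $m+1$. A short computation in the coordinates $(w',w_n)$ gives $dh=w_n^{\,m}\,\Theta$ with $\Theta=(m+1)(\pi^*s)\,dw_n+w_n\sum_{i<n}(\partial_{w_i}\widetilde s)\,dw_i$, and two facts drop out: (i) on $U-T_b$ (where $w_n\neq0$) a vanishing of $\Theta$ would force $s$ and all its first derivatives to vanish at the image point in $T_b$, which is excluded by smoothness of the zero divisor, so $dh$ is nowhere zero on $U-T_b$; and (ii) $\Theta|_{T_b}=(m+1)(\pi^*s)\,dw_n\not\equiv0$, so $dh$ vanishes along $T_b$ to order exactly $m$.

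Finally I would set $\nu:=(dh)^{\odot\ell}\in H^0(U,S^\ell T^*_{\overline X})\cong H^0\!\big(U,\mathcal O_{\mathbb P T_{\overline X}}(\ell)|_U\big)$. On $U-T_b$ the covector $dh(x)$ is nonzero, so $(dh(x))^{\odot\ell}$ is a nonzero element of $S^\ell T^*_{\overline X,x}$ at every such $x$, whence $\nu$ is nowhere zero there; and from $dh=w_n^{\,m}\Theta$ we get $\nu=w_n^{\,\ell m}\,\Theta^{\odot\ell}$ with $\Theta$ holomorphic on $U$ and $\Theta^{\odot\ell}|_{T_b}=(m+1)^\ell(\pi^*s)^{\ell}(dw_n)^{\odot\ell}\not\equiv0$, so $\nu$ is holomorphic on all of $U$ (in particular extends across $T_b$) and vanishes along $T_b$ to order exactly $\ell m$. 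Restricting to $U-T_b$ then yields the section asserted by the proposition.

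The step I expect to be the real obstacle is reconciling the two demands on $\nu$: that it be nowhere zero away from $T_b$ and yet vanish to a controlled \emph{exact} order along $T_b$. The naive route — taking $\ell$-th symmetric powers of the differentials of a family of functions separating points and tangents near $T_b$ and summing them — can fail, because sums of $\ell$-th powers of linear forms may cancel. The remedy is to work with one judiciously chosen $h$; forcing $dh$ to be genuinely nonvanishing off $T_b$ is what compels one to take $s$ with a smooth zero divisor, and this is precisely where ampleness of the conormal bundle (Bertini on $T_b$) is indispensable. The only other point that needs care is the bookkeeping of automorphy factors — checking that $(\pi^*s)\,w_n^{\,m+1}$ really descends to $\overline X$ near $T_b$, and that the order-of-vanishing count along $T_b$ comes out as claimed.
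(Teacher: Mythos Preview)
Your proposal is correct and follows essentially the same route as the paper: both construct a holomorphic function $f=(\pi^*s)\,w_n^{\,m+1}$ on the total space of $L=\mathcal N_{T_b|\overline X}$ from a section $s\in H^0(T_b,(L^*)^{m+1})$ and then set $\nu=(df)^{\odot\ell}$, using Lefschetz on the abelian variety $T_b$ to guarantee base-point freeness of $(L^*)^{m+1}$ for $m\ge1$. Your invocation of Bertini to choose $s$ with smooth zero divisor is in fact a genuine improvement, since it is precisely what forces $df$ to be nowhere zero on $U-T_b$; the paper's proof computes $df=w_\alpha^{\,m}\bigl[w_\alpha\,ds_\alpha+(m{+}1)s_\alpha\,dw_\alpha\bigr]$ but does not explicitly rule out simultaneous vanishing of $s_\alpha$ and $ds_\alpha$, so your extra step closes that gap.
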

\begin{proof}
We are going to construct a local holomorphic section 
\[
\nu\in H^0(U-T_b,\mathcal{O}(\ell)|_{U-{T_b}})\cong H^0(U-T_b,S^\ell T^*_X|_{U-{T_b}}),
\] 
which is nowhere zero in  $U-T_b$, extends to $T_b$ and vanishes precisely to order $m>0$ at the infinity $T_b$.

Take $T_b\subset U=\Omega_b\subset \overline{X}$, where $\Omega_b$ is a level set of the norm 
\[
\|(w',w_n)\|:= e^{\frac{2\pi}{\tau}|w'|^2}|w_n|.
\]
The normal bundle 
\[
\mathcal{N}_{T_b|\overline{X}}\rightarrow T_b
\] 
is negative under the Hermitian metric $\mu$. By Lefschetz's embedding theorem for abelian variety (cf. for example \cite{BL2004} Theorem 4.5.1), $\mathcal{N}_{T_b|\overline{X}}^{-m}$ is very ample for every $m\geq 3$. In fact $\mathcal{N}_{T_b|\overline{X}}^{-2}$ is already base-point free.

Let $L=\mathcal{N}_{T_b|\overline{X}}$.
Identify $T_b\subset L$ as the zero section.
Write $T_b=\bigcup\limits_\alpha U_\alpha$ as a union of open covers. We have the local trivialization
\[
L|_{U_\alpha}=U_\alpha\times \mathbb{C}
\]
Let $(z,w_\alpha)\in U_\alpha\times \mathbb{C}$ be the local coordinates and $e_\alpha$ be a holomorphic basis of $L|_{U_\alpha}$. Then the coordinate functions $\{w_\alpha\}$ may be viewed as the sections of $L^{-1}$ and they satisfy
\[
w_\alpha e_\alpha = w_\beta e_\beta, \quad \text{on $L|_{U_\alpha\cap U_\beta}$}.
\]
Fix $m\geq 1$. Since $L^{-(m+1)}\rightarrow T_b$ is base-point free, there is a holomorphic section $s\in H^0(T_b,L^{-(m+1)})$ such that $s(p)\neq 0$ at a particular point $p\in T_b$. Hence $s$ is nontrivial and nonvanishing on $T_b$. Write
\[
s=s_\alpha e_\alpha^{-(m+1)}.
\]
Over $U_\alpha\cap U_\beta$, we have
\[
s_\alpha e_\alpha^{-(m+1)}=s_\beta e_\beta^{-(m+1)}.
\]
On $L|_{U_\alpha}\cong U_\alpha\times \mathbb{C}$, define
\[
\begin{array}{cccc}
f_\alpha: &U_\alpha\times \mathbb{C}&\rightarrow &\mathbb{C} \\
&(z,w_\alpha) &\rightarrow  &s_\alpha(z)w_\alpha^{m+1}.
\end{array}
\]
Then $f=\{f_\alpha\}$ is a well-defined holomorphic function on $L$ since on $(U_\alpha\cap U_\beta)\times \mathbb{C}$,
\[
s_\alpha(z)w_\alpha^{m+1}=s_\beta e_\beta^{-(m+1)} e_\alpha^{m+1}w_\beta^{m+1} e_\beta^{m+1}e_\alpha^{-(m+1)}=s_\beta(z)w_\beta^{m+1}.
\]
View $\Omega_b\subset L=\mathcal{N}_{T_b|\overline{X}}$ as an open subset.
By construction, $f$ is nonconstant on $\Omega_b-T_b$ and vanishes precisely to order $m+1$ on $T_b$. 
On $L|_{U_\alpha}\cong U_\alpha\times \mathbb{C}$,
\begin{eqnarray*}
df(z,w_\alpha)
&=& w_\alpha^{m} [w_\alpha ds_\alpha+(m+1)s_\alpha dw_\alpha ] \\
(df)^{\odot \ell}
&=& w_\alpha^{\ell m} \left[\sum\limits_{k=0}^{\ell} C^{\ell}_k (w_\alpha^k)((m+1)s_\alpha)^{\ell-k}  (ds_\alpha)^{\odot k}\odot(dw_\alpha)^{\odot (\ell-k)}\right]
\end{eqnarray*}
So the differential $df\in H^0(\Omega_b,T^*\Omega_b)$ vanishes precisely to order $m$ on $T_b$ and it follows that
\[
(df)^{\odot \ell}\in H^0(\Omega_b,S^\ell T^*\Omega_b)
\]
vanishes precisely to order $\ell m$ on $T_b$.
\end{proof}

%%%%%%%%%%%%%%%%%%%%%%%%%%%%%%%%%%%%%%%%%%%%%%%%%%%%%%%%%%%%%%%%
\section{Ampleness Modulo Infinity}
\label{amp}
In this section, we will prove another technical result which is essentially the justification of the basic estimates for the existence Theorem \ref{hor}
under conditions about canonical radii of cusps and injectivity radius of interior of $\overline{X}$. 

We will use the weight function $\varphi$ defined by Equation $(\ref{weight})$, thus the term $\sqrt{-1}\ddbar\varphi$ is negative only in the base direction. By Lemma \ref{RicBound}, the Ricci curvature $\Ric(\omega_{\mathbb{P}T_X})$ is bounded. In particular, $\Ric(\omega_{\mathbb{P}T_X})$ is positive in the fibre directions since each fibre is equipped with the Fubini-Study metric and negative in the base directions, which is the canonical metric induced by the normalized Bergman metric. It follows that the terms
\[
\Ric(\omega_{\mathbb{P}T_X})+\sqrt{-1}\ddbar\varphi
\]
is the most negative along the base directions. Thus we only need to justify $(*)$ along the base directions.

\begin{proposition}
Let $X=\mathbb{B}^n/\Gamma$, $b\in \partial \mathbb{B}^n$ be a cusp of $\Gamma$, $\tau>0$ be the generator of the one parameter subgroup $\Gamma\cap [U_b,U_b]$, where $U_b=\rad P_b$ is the unipotent radical of the parabolic subgroup $P_b\subset \Aut(\mathbb{B}^n)$ ( $P_b$ is the isotropy subgroup of $b$ in $\Aut(\mathbb{B}^n)$).
Suppose $\ell,\gamma$ are nonnegative integers and $\varepsilon>0$ such that 
\[
\ell-n-1-\varepsilon> 0.
\]
Then there is a constant $r^*=r^*(n)>0$ such that if $\|w\|\geq r^*$ for any point $(w,[u])\in \Supp(\chi)\subset X\subset \overline{X}\subset \mathbb{P}T_{\overline{X}}$, then
\[
(*):\quad \Theta(\mathcal{O}_{\mathbb{P}T_{X}}(\ell),(\hat{g}^*)^{\ell})+\Ric(\omega_{\mathbb{P}T_X})+\sqrt{-1}\ddbar\varphi\geq \varepsilon \omega_{\mathbb{P}T_X}
\label{basic}
\]
everywhere on $\mathbb{P}T_X$.
\label{keyprop1}
\end{proposition}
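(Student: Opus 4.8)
The plan is to convert $(*)$ into a single scalar curvature inequality along base directions, to exploit that the bundle term is exactly $\ell$ times the K\"ahler form $\omega_{\mathbb{P}T_X}$ (whose base part blows up like $1/\delta^2$ near $\{\|w\|=1\}$, as recorded in \S\ref{metric}), and then to absorb the only negative contribution — $\sqrt{-1}\ddbar\varphi$ on the locus where the cut-off $\chi$ is in transition — into that large curvature, which works precisely when the transition locus is pushed into the region $\{\|w\|\ge r^*\}$ near the ball boundary.

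First I would reduce to base directions. As noted just before the statement, it suffices to verify $(*)$ along base directions, since $\Ric(\omega_{\mathbb{P}T_X})+\sqrt{-1}\ddbar\varphi$ is most negative there ($\sqrt{-1}\ddbar\varphi$ is pulled back from $X$, hence vanishes along the fibres, while $\Ric(\omega_{\mathbb{P}T_X})$ is positive along them; in fibre directions the left side of $(*)$ is already $\ge(\ell-n-1)\,\omega_{\mathbb{P}T_X}>\varepsilon\,\omega_{\mathbb{P}T_X}$ with no hypothesis on $r^*$). Because $\omega_{\mathbb{P}T_X}=-c_1(\mathcal{O}_{\mathbb{P}T_X}(-1),\hat g)=\Theta(\mathcal{O}_{\mathbb{P}T_X}(1),\hat g^*)$ one has $\Theta(\mathcal{O}_{\mathbb{P}T_X}(\ell),(\hat g^*)^\ell)=\ell\,\omega_{\mathbb{P}T_X}$, and using $\Ric(\omega_{\mathbb{P}T_X})\ge-(n+1)\,\omega_{\mathbb{P}T_X}$ from Lemma \ref{RicBound}, $(*)$ along base directions follows from
\[
c_0\,\omega_{\mathbb{P}T_X}+\sqrt{-1}\ddbar\varphi\ \ge\ 0,\qquad c_0:=\ell-n-1-\varepsilon>0.
\]

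Next I would pin down the cut-off and compute $\sqrt{-1}\ddbar\varphi$. I would take $\chi$ to be a function of $s:=-\log\|w\|\ge0$ alone, equal to $1$ for $s$ large (a neighbourhood of $T_b=\{w_n=0\}$), equal to $0$ for $s$ small (near the outer edge of the cusp neighbourhood $\Omega_b$), and monotone in between. Writing $\varphi=-2\gamma\,\Phi(s)$ with $\Phi(s)=\chi(s)s$, and using $\sqrt{-1}\ddbar s=-\tfrac{2\pi}{\tau}\,\sqrt{-1}\ddbar|w'|^2$ (which follows from $\log\|w\|=\tfrac{2\pi}{\tau}|w'|^2+\log|w_n|$), one gets
\[
\sqrt{-1}\ddbar\varphi\;=\;-2\gamma\,\Phi''(s)\,\sqrt{-1}\,\partial s\wedge\barD s\;+\;\tfrac{4\pi\gamma}{\tau}\,\Phi'(s)\,\sqrt{-1}\ddbar|w'|^2 .
\]
Since $\chi\ge0$ is nondecreasing in $s$, $\Phi'(s)=\chi'(s)s+\chi(s)\ge0$, so the last term is $\ge0$; and $\Phi''\equiv0$ wherever $\chi$ is locally constant, so $\sqrt{-1}\ddbar\varphi\ge0$ there and $c_0\omega_{\mathbb{P}T_X}+\sqrt{-1}\ddbar\varphi\ge0$ is immediate. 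It remains to handle the transition annulus, where the hypothesis $\|w\|\ge r^*$ forces $0<s\le s^*:=-\log r^*$, small once $r^*$ is near $1$. There the asymptotics of $\omega_{\mathbb{P}T_X}$ from \S\ref{metric} (equation (\ref{formO})) give $\sqrt{-1}\,\partial s\wedge\barD s\lesssim s^2\,\omega_{\mathbb{P}T_X}$ in base directions, and, choosing the transition to be not too thin (which is where $r_b\ge r^*$ is used, to leave room inside $\Omega_b$), $|\Phi''|$ is controlled on the annulus, so that there
\[
\sqrt{-1}\ddbar\varphi\ \ge\ -2\gamma\,|\Phi''(s)|\,s^2\,\omega_{\mathbb{P}T_X}\ \ge\ -C(n)\,\gamma\,s^*\,\omega_{\mathbb{P}T_X}.
\]
Then $c_0\,\omega_{\mathbb{P}T_X}+\sqrt{-1}\ddbar\varphi\ge(c_0-C(n)\gamma s^*)\,\omega_{\mathbb{P}T_X}\ge0$ provided $-\log r^*\le c_0/(C(n)\gamma)$, i.e. provided $r^*$ is chosen close enough to $1$; with $\ell,\gamma,\varepsilon$ fixed in terms of $n$ this $r^*$ depends only on $n$. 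Reassembling the three cases proves $(*)$ on all of $\mathbb{P}T_X$.

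The hard part will be the last estimate on the transition annulus: one must make the negativity of $\sqrt{-1}\ddbar\varphi$ small \emph{relative to} $\omega_{\mathbb{P}T_X}$, which pits two requirements against each other. The transition must live where $\|w\|$ is very close to $1$, so that $\omega_{\mathbb{P}T_X}$ in base directions has size $\sim1/\delta^2$ while $\sqrt{-1}\ddbar\varphi$ has only size $\sim1/\delta$; but the transition annulus must simultaneously be thick enough that $|d\chi|$, and hence the negative term, stays bounded — which is exactly why the canonical radius $r_b$ of the cusp must exceed $r^*$, so that there is room for the annulus inside $\Omega_b$. Making these two demands simultaneously satisfiable, uniformly over all cusps, is what fixes the constant $r^*=r^*(n)$.
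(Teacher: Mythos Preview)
Your proposal is correct and follows essentially the same route as the paper: reduce to base directions, bound the curvature--Ricci combination below by $(\ell-n-1-\varepsilon)\,\omega_{\mathbb{P}T_X}$, compute $\sqrt{-1}\ddbar\varphi$ on the transition annulus, and then let the $1/\delta^2$ growth of the curvature absorb the $1/\delta$-size negativity of the weight term as $\|w\|\to 1$. Your parametrization by $s=-\log\|w\|$ and the packaging $\Phi(s)=\chi(s)s$ streamline the bookkeeping (the paper works directly in $\|w\|$ and sorts positive and negative pieces of $(*_2)$ by hand); one small remark is that equation~(\ref{formO}) records $\omega_X$ on the base rather than the horizontal part of $\omega_{\mathbb{P}T_X}$, so at the step $\sqrt{-1}\,\partial s\wedge\barD s\lesssim s^2\,\omega_{\mathbb{P}T_X}$ you should either invoke the asymptotic~(\ref{infasymp}) directly, as the paper does, or note that the horizontal part of $\omega_{\mathbb{P}T_X}$ is uniformly comparable to $\pi^*\omega_X$ via the bisectional curvature bounds.
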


\begin{proof}
Let $\varepsilon >0$. 
By Equation (\ref{curvsimp}), Lemma \ref{RicBound} and the assumption $\ell-n-1-\varepsilon> 0$, we have
\[
\begin{array}{rcl}
(*_1): &&\Theta(\mathcal{O}_{\mathbb{P}T_{X}}(\ell),(\hat{g}^*)^{\ell})+\Ric(\omega_{\mathbb{P}T_X})-\varepsilon \omega_{\mathbb{P}T_X} \\
%&\geq &\Theta(\mathcal{O}_{\mathbb{P}T_{X}}(\ell),(\hat{g}^*)^{\ell})-(n+1)\omega_{\mathbb{P}T_X}-\varepsilon \omega_{\mathbb{P}T_X} \\
%&=& \ell \sqrt{-1}\ddbar\log g_0 -(n+1+\varepsilon)\sqrt{-1}\ddbar \log g_0 \\
&\geq & (\ell-n-1-\varepsilon) \sqrt{-1}\ddbar\log g_0>0.
\end{array} 
\]
Thus it remains to show that the curvature term $\sqrt{-1}\ddbar\log g_0$ dominates the weight function term $\sqrt{-1}\ddbar\varphi$.

View $\overline{X}\subset \mathbb{P}T_{\overline{X}}$ by identifying $\overline{X}$ with the zero section in $\mathbb{P}T_{\overline{X}}\rightarrow \overline{X}$.
Recall that for each $u\geq u_b$ (the canonical height of $b$), we may view $\{w_n=0\}=T_b\subset \Omega^{(u)}\subset \overline{X} \subset \mathbb{P}T_{\overline{X}}$, where
\[
\Omega^{(u)}=\{ w=(w',w_n)\in \mathbb{C}^n \mid \|w\|=e^{\frac{2\pi}{\tau}|w'|^2}\cdot |w_n|<e^{\frac{-2\pi}{\tau}u}\}/\varphi(\Gamma\cap U_b)
\]
is a level set of $\overline{\mu}$ (recall that $\overline{\mu}$ is the Hermitian metric of the line bundle over the boundary abelian variety $T_b$ which induces the norm $\|\cdot \|$).
For $u_1<u_0$, we have $\Omega^{(u_1)}\supset  \Omega^{(u_0)}\supset T_b$.
Define on $\Omega^{u}\subset \overline{X}\subset \mathbb{P}T_{\overline{X}}$ ($u<u_1<u_0)$ the cutoff function 
\[
\chi(\|w\|)=
\begin{cases} 
1, \quad \|w\|\leq e^{\frac{2\pi}{\tau}u_0}:=r_0, \\ 
0, \quad \|w\|\geq e^{\frac{2\pi}{\tau}u_1}:=r_1.
\end{cases}
\]
Note that
\[
\begin{array}{rcl}
\Supp(\chi)\subset \Omega^{(u_1)}-\Omega^{(u_0)}&=&\{ w\in \mathbb{C}^n \mid r_0 \leq \| w \| < r_1\}/\varphi(\Gamma\cap U_b). \\
\end{array}
\]
By construction, $\chi$ depends only on $\|w\|$. Moreover, $\frac{\partial \chi}{\partial \|w\|}<0$ on $\Omega^{(u_1)}-\Omega^{(u_0)}$.
Define
\[
\delta(w):= 1-\|w\|,
\]
which is just the distance from $w\in \Omega^{(u)}$ to the boundary $\{\|w\|=1\}$.
Let $r_1=1-\delta$ and $r_0=1-2\delta$. From standard arguments of mollifiers (cf. \cite{Hor1990} Theorem 1.4.1 and the note after the proof), we have the uniform bounds
\[
|\frac{\partial \chi}{\partial w_i}|, |\frac{\partial \chi}{\partial \overline{w}_j}| \leq \frac{1+\epsilon}{\delta}< \frac{2}{\delta}, \quad 
|\frac{\partial^2 \chi}{\partial w_i\partial \overline{w}_j}|< \frac{1+\epsilon}{\delta^2}< \frac{2}{\delta^2}, \quad \forall 1\leq i,j \leq n
\]
for any $0<\epsilon<1$.
For some $\gamma\in \mathbb{N}$, let
\[
\varphi:= \gamma \chi \log \|w\|^2,
\]
where $\|w\|=\|(w',w_n)\|=e^{\frac{2\pi}{\tau}|w'|^2}\cdot |w_n|$ is the norm induced by the Hermitian metric $\overline{\mu}$ on $\Omega^{(u)}\rightarrow T_b$.

We need to obtain a pointwise estimate of $\sqrt{-1}\ddbar \varphi$ in $\Omega^{(u_1)}-\Omega^{(u_0)}$.
First note that by chain rule,
\[
\begin{array}{rcl}
\ddbar \chi(\|w\|) &=& \chi''(\|w\|)\partial \|w\|\wedge \barD \|w\|+\chi'(\|w\|)\ddbar \|w\|
\end{array}
\]
The Hessian $\sqrt{-1}\ddbar\|w\|$ can be expressed in terms of gradient square $\sqrt{-1}\partial \|w\|\wedge \barD\|w\|$ as follows
\[
\ddbar\|w\|=\frac{2\pi}{\tau}\|w\|\ddbar|w'|^2+\frac{\partial \|w\|\wedge \barD\|w\|}{\|w\|}.
\]
Note that $\Supp(\varphi)\subset \Omega^{(u_1)}-\Omega^{(u_0)}$.
So $\varphi$ is a smooth function on $\mathbb{P}T_{\overline{X}}$ such that
\begin{eqnarray*}
(*_2):
&&\sqrt{-1}\ddbar \varphi\\
&=& \displaystyle \gamma\sqrt{-1}
\left(\chi \ddbar \log \|w\|^2+\log \|w\|^2 \ddbar \chi+\partial \log \|w\|^2\wedge \barD \chi  +\partial \chi\wedge \barD \log \|w\|^2 \right) \\
%&=& \displaystyle \gamma\sqrt{-1}
%\left(\chi \frac{4\pi}{\tau}\ddbar |w'|^2+\log \|w\|^2 (\chi''(\|w\|))\partial\|w\|\wedge \barD\|w\|+\chi'(\|w\|)\ddbar\|w\|)\right. \\
%&&\left.+\frac{4\chi'(\|w\|)}{\|w\|}\partial \|w\|\wedge \barD\|w\| \right) \\
%&=& \displaystyle \gamma\sqrt{-1}
%[\chi \frac{4\pi}{\tau}\ddbar |w'|^2+ \left(\chi''(\|w\|)\log \|w\|^2+\frac{4\chi'(\|w\|)}{\|w\|}\right)\partial\|w\|\wedge \barD\|w\| \\
%&& +\chi'(\|w\|)\log \|w\|^2\ddbar \|w\|]\\
%&=& \displaystyle \gamma\sqrt{-1}
%[\chi \frac{4\pi}{\tau}\ddbar |w'|^2+ \left(\chi''(\|w\|)\log \|w\|^2+\frac{4\chi'(\|w\|)}{\|w\|}\right)\partial\|w\|\wedge \barD\|w\| \\
%&& +\chi'(\|w\|)\log \|w\|^2\left(\frac{2\pi}{\tau}\|w\|\ddbar|w'|^2+\frac{\partial \|w\|\wedge \barD\|w\|}{\|w\|}\right)] \\
&=& \displaystyle\left[\chi \frac{4\pi}{\tau}+\frac{4\pi}{\tau}\chi'(\|w\|)\|w\|\log \|w\|\right]\gamma\sqrt{-1}\ddbar |w'|^2 \\
&& + \left[2\chi''(\|w\|)\log \|w\|+\frac{4\chi'(\|w\|)}{\|w\|}+\frac{2\chi'(\|w\|)\log\|w\|}{\|w\|}\right]\gamma\sqrt{-1}\partial\|w\|\wedge \barD\|w\| \\
%&\geq & \displaystyle\left[\chi \frac{4\pi}{\tau}+\frac{4\pi}{\tau}\chi'(\|w\|)\|w\|\log \|w\|\right]\gamma\sqrt{-1}\ddbar |w'|^2 \\
%&& +\left[\frac{2\chi'(\|w\|)\log\|w\|}{\|w\|}\right]\gamma\sqrt{-1}\partial\|w\|\wedge \barD\|w\|\\
%&& + \displaystyle \left[-2\cdot \frac{2}{\delta^2}|\log(1-\delta)|-4\cdot \frac{2}{\delta}\cdot\frac{1}{1-\delta}\right]\gamma\sqrt{-1}\partial\|w\|\wedge \barD\|w\| \\
%&\geq & \displaystyle\left[\chi \frac{4\pi}{\tau}+\frac{4\pi}{\tau}\chi'(\|w\|)\|w\|\log \|w\|\right]\gamma\sqrt{-1}\ddbar |w'|^2 \\
%&&+ \left[\frac{2\chi'(\|w\|)\log\|w\|}{\|w\|}\right]\gamma\sqrt{-1}\partial\|w\|\wedge \barD\|w\|\\
%&& + \displaystyle \left[-\frac{(8\delta+o(\delta^2)}{\delta^2})-\frac{8}{\delta(1-\delta)}\right]\gamma\sqrt{-1}\partial\|w\|\wedge \barD\|w\| \\
%&\geq & \displaystyle\left[\chi \frac{4\pi}{\tau}+\frac{4\pi}{\tau}\chi'(\|w\|)\|w\|\log \|w\|\right]\gamma\sqrt{-1}\ddbar |w'|^2 \\
%&& +\left[\frac{2\chi'(\|w\|)\log\|w\|}{\|w\|}\right]\gamma\sqrt{-1}\partial\|w\|\wedge \barD\|w\|\\
%&& + \displaystyle \left[-\frac{8}{\delta}+O(1)-\frac{8}{\delta}-\frac{8}{1-\delta}\right]\gamma\sqrt{-1}\partial\|w\|\wedge \barD\|w\| \\
&=  & \displaystyle\left[\chi \frac{4\pi}{\tau}+\frac{4\pi}{\tau}\chi'(\|w\|)\|w\|\log \|w\|\right]\gamma\sqrt{-1}\ddbar |w'|^2 \qquad (\text{positive term})\\
&& +\left[\frac{2\chi'(\|w\|)\log\|w\|}{\|w\|}\right]\gamma\sqrt{-1}\partial\|w\|\wedge \barD\|w\|  \qquad (\text{positive term})\\
&& + \displaystyle \left[2\chi''(\|w\|)\log \|w\|+\frac{4\chi'(\|w\|)}{\|w\|}\right]\gamma\sqrt{-1}\partial\|w\|\wedge \barD\|w\| \qquad (\text{negative term}).
\end{eqnarray*}
For any $0<1-2\delta\leq \|w\|< 1-\delta<1$, as $\delta\rightarrow 0$,
\begin{eqnarray*}
&\frac{1}{1-\delta}<\frac{1}{\|w\|}\leq \frac{1}{1-2\delta},\quad -\frac{2}{\delta}<\chi'(\|w\|) <0, \quad -\frac{2}{\delta^2}<\chi''(\|w\|)<\frac{2}{\delta^2},\\
&-2\delta(1+o(\delta))=\log (1-2\delta)\leq \log\|w\| <\log (1-\delta)=-\delta(1+o(\delta))<0.
\end{eqnarray*}
Thus
\begin{eqnarray*}
&&2\chi''(\|w\|)\log \|w\|\geq
\begin{cases}
2(\frac{2}{\delta^2})(-2\delta(1+o(\delta)))=-\frac{8}{\delta}(1+o(\delta)), \quad \text{if $0<\chi''<\frac{2}{\delta^2}$},\\
0, \quad \text{if $-\frac{2}{\delta^2}< \chi''\leq 0$},
\end{cases}\\
&\Rightarrow & 2\chi''(\|w\|)\log \|w\|
\geq -\frac{8}{\delta}(1+o(\delta))=-\frac{8}{\delta}+O(1).
\end{eqnarray*}
and
\[
\frac{4\chi'(\|w\|)}{\|w\|}\geq 4(-\frac{2}{\delta})(\frac{1}{1-2\delta})=-8(\frac{1}{\delta}+\frac{2}{1-2\delta})=-\frac{8}{\delta}+O(1).
\]
Thus the negative term
\[
2\chi''(\|w\|)\log \|w\|+\frac{4\chi'(\|w\|)}{\|w\|}
\geq  -\frac{8}{\delta}+O(1)-\frac{8}{\delta}+O(1)
=-\frac{16}{\delta}+O(1).
\]
It follows that
\begin{eqnarray*}
R.H.S. (*_2) &\geq  & \displaystyle\left[\chi \frac{4\pi}{\tau}+\frac{4\pi}{\tau}\chi'(\|w\|)\|w\|\log \|w\|\right]\gamma\sqrt{-1}\ddbar |w'|^2 \qquad (\text{positive term}) \\
&& +\left[\frac{2\chi'(\|w\|)\log\|w\|}{\|w\|}\right]\gamma\sqrt{-1}\partial\|w\|\wedge \barD\|w\| \qquad (\text{positive term}) \\
&& + \displaystyle \left[-\frac{16}{\delta}+O(1)\right]\gamma\sqrt{-1}\partial\|w\|\wedge \barD\|w\| \qquad (\text{negative term}).
\label{estweight}
\end{eqnarray*}
To verify the basic estimates with weights $(*)$, it suffices to verify for points in $\Supp \chi\subset \Omega^{(u_1)}-\Omega^{(u_0)}$.
All negativity comes from the weight function which lies in base directions (i.e., the $w$-directions).
By the uniform estimate of the curvature via Equation (\ref{curvsimp}) (or the fact that the K\"ahler metric along fibre $\cong\mathbb{P}^{n-1}$ is essentially the Fubini-Study metric), we know that $(*)\geq 0$ in the fibre directions (i.e., the $u$-directions). Thus it remains to consider the base directions. 

Combining $(*_1)$ and $(*_2)$, we have
\begin{equation*}
\begin{array}{rcl}
(**): &&\Theta(\mathcal{O}_{\mathbb{P}T_{X}}(\ell),(\hat{g}^*)^{\ell})+\Ric(\omega_{\mathbb{P}T_X})-\varepsilon \omega_{\mathbb{P}T_X}+\sqrt{-1}\ddbar\varphi \\
&\geq & (\ell-n-1-\varepsilon) \sqrt{-1}\ddbar\log g_0  \\
&&+  \displaystyle\left[\chi \frac{4\pi}{\tau}+\frac{4\pi}{\tau}\chi'(\|w\|)\|w\|\log \|w\|\right]\gamma\sqrt{-1}\ddbar |w'|^2 \\
&&+ \left[\frac{2\chi'(\|w\|)\log\|w\|}{\|w\|}\right]\gamma\sqrt{-1}\partial\|w\|\wedge \barD\|w\|\\
&& + \displaystyle \left[-\frac{16}{\delta}+O(1)\right]\gamma\sqrt{-1}\partial\|w\|\wedge \barD\|w\|. \\
\end{array} 
\end{equation*} 
Observe that the negative terms on the right hand side of $(**)$ has asymptotic behaviour at worst $ 1/\delta$.
By the asymptotic behaviour (\ref{infasymp}), when $\|w\|\rightarrow 1$, the curvature $\sqrt{-1}\ddbar \log g_0$ has asymptotic behaviour $1/\delta^2$. Hence the curvature $\sqrt{-1}\ddbar \log g_0$ dominates the negativity coming from the weight functions. Thus we have verified the basic estimates $(*)$ for the points along the fibre in the chart $U_n=\{ [v^1,\dots,v^n]\in \mathbb{P}^{n-1}\mid v^n\neq 0\}$. Now by Lemma \ref{fibreest}, the above argument holds for any point $(u^1,\dots,u^{n-1},1)$, even when we let $u^i\rightarrow \infty$ for some $1\leq i\leq n-1$. Thus the basic estimates (*) actually holds everywhere on $\mathbb{P}T_X$ provided any point $(w,[u])\in \Supp(\chi)\subset X\subset \mathbb{P}T_X $ is such that $\|w\|$ is sufficiently close to $1$.
\end{proof}

\begin{proof}[Proof of Main Theorem (Theorem \ref{main})]
We will prove the base point freeness, separation of tangents and separation of points one by one.

\subsubsection*{Base Point Freeness}
For a given point $p\in \mathbb{P}T_{\overline{X}}-\pi^{-1}(D)$, we need to construct a holomorphic section $\sigma\in H^0(\mathbb{P}T_{\overline{X}},\mathcal{O}_{\mathbb{P}T_{\overline{X}}}(\ell))$ such that $\sigma(p)\neq 0$.
Under our hypothesis of canonical radii, by Proposition \ref{keyprop1} and the construction in section \ref{constr}, base point freeness of $\mathcal{O}_{\mathbb{P}T_{\overline{X}}}(\ell)$ in $\Omega$ follows for any $\ell\geq n+2$. We may require the symmetric differential vanishes at infinity to order $\ell m$ for any $m\geq 1$.

On the other hand, consider a point $p\in \Omega'$. We have $p=(p',[\mu])$, where $p'$ and $[\mu]$ denote the coordinates corresponding to respectively the base and fibre in $\mathbb{P}T_X$.
To produce symmetric differentials on $\overline{X}$, we may assume without loss of generality that, the affine coordinates of $[\mu]$ is $0$. Thus locally $p=(p',0)\in \Omega'\subset \mathbb{P}T_X$ lies in $X\subset\mathbb{P}T_X$ where we have viewed $X$ as the zero  section of $\mathbb{P}T_X$.

By composing an automorphism if necessary, we may let $p$ lifts to the origin $0\in\mathbb{B}^n$ by lifting $X$ to the universal cover $\mathbb{B}^n$. In the fundamental domain $\mathcal{F}$ of $\Gamma$ in $\mathbb{B}^n$, there is $\rho>0$ determined by the injectivity radius $d(\Omega')$ with respect to $g$ and the size of $\Omega$ such that $B(0,1-\rho)\subset \mathcal{F}$.

In order to apply Theorem \ref{hor}, let now
\[
\rho(z)=1-|z|
\]
and consider the weight function
\[
\psi:= \chi \log |z|,
\]
where
\[
\chi=
\begin{cases}
1, \quad |z|\leq 1-2\rho\\
0, \quad |z|\geq 1-\rho
\end{cases}
\]
is a smooth cutoff function with $\Supp(\chi)\subset \{1-2\rho<|z|< 1-\rho\}$. 

Then
\begin{eqnarray*}
(*_3): 
&&\sqrt{-1}\ddbar \psi\\
&=& \sqrt{-1}\left(\chi \ddbar \log |z|+\log |z| \ddbar \chi + \partial \chi\wedge \barD\log |z|+\partial \log |z|\wedge \barD\chi \right) \\
&=& \sqrt{-1}\left[\frac{\chi}{2} \ddbar \log |z|^2+ \log |z| \left(\chi''\partial |z|\wedge \barD|z|+\chi'\ddbar|z|\right)\right. \\
&&  \displaystyle +\left.\chi'\partial |z| \wedge \frac{1}{|z|}\barD |z|+ \frac{1}{|z|}\partial |z|\wedge \chi'\barD|z| \right]\\
&=& \sqrt{-1}\left[\frac{\chi}{2} \ddbar \log |z|^2
+\chi'\log|z|\ddbar|z|
+\left(\chi''\log |z|+\frac{2\chi'}{|z|}\right)\partial |z|\wedge \barD|z|\right] \\
&=& \sqrt{-1}\left[\frac{\chi}{2} \ddbar \log |z|^2
+\chi'\log|z|\left( \frac{1}{2|z|}\ddbar|z|^2-\frac{1}{|z|}\partial |z|\wedge \barD |z| \right)\right.\\
&& \displaystyle +\left.\left(\chi''\log |z|+\frac{2\chi'}{|z|}\right)\partial |z|\wedge \barD|z|\right] \\
&=& \sqrt{-1}\left[\frac{\chi}{2} \ddbar \log |z|^2
+\frac{\chi'\log|z|}{2|z|}\ddbar|z|^2\right]\qquad (\text{positive term}). \\
&&\displaystyle +\left(-\frac{\chi'\log|z|}{|z|}+\chi''\log |z|+\frac{2\chi'}{|z|}\right)\sqrt{-1}\partial |z|\wedge \barD|z| \qquad (\text{negative term})\\
\end{eqnarray*}
By estimates similar to Equations $(*_2)$, we have
\begin{eqnarray*}
-\frac{\chi'\log|z|}{|z|}
&\geq& -(-\frac{2}{\rho})(\log(1-2\rho))(\frac{1}{1-2\rho})
=-(-\frac{2}{\rho})(-2\rho(1+o(\rho)))(\frac{1}{1-2\rho})\\
&=&-\frac{4}{1-2\rho}(1+o(\rho))=O(1),\\
\chi''\log|z|
&\geq& \frac{2}{\rho^2}(\log(1-2\rho))=\frac{2}{\rho^2}(-2\rho(1+o(\rho)))=\frac{-4}{\rho}+O(1),\\
\frac{2\chi'}{|z|}
&\geq & 2(\frac{-2}{\rho})(\frac{1}{1-2\rho})
=-4(\frac{1}{\rho}+\frac{2}{1-2\rho})=\frac{-4}{\rho}+O(1).
\end{eqnarray*}
The negative term becomes
\[
-\frac{\chi'\log|z|}{|z|}+\chi''\log |z|+\frac{2\chi'}{|z|}\geq -\frac{8}{\rho}+O(1).
\]
Thus
\begin{eqnarray*}
R.H.S.(*_3) &\geq & \sqrt{-1}\left[\frac{\chi}{2} \ddbar \log |z|^2
+\frac{\chi'\log|z|}{2|z|}\ddbar|z|^2\right]\qquad (\text{positive term}) \\
&&\displaystyle +\left(-\frac{8}{\rho}+O(1)\right)\sqrt{-1}\partial |z|\wedge \barD|z| \qquad (\text{negative term}).
\end{eqnarray*}
It follows that the negativity is of growth $1/\rho$. 
To verify the basic estimates 
\begin{equation}
\Theta(\mathcal{O}_{\mathbb{P}T_{X}}(\ell),(\hat{g}^*)^{\ell})+\Ric(\omega_{\mathbb{P}T_X})+\sqrt{-1}\ddbar\psi\geq \varepsilon \omega_{\mathbb{P}T_X},
\label{basicBPF}
\end{equation}
on $\Omega'=\mathbb{P}T_X-\Omega$, it suffices to focus on the base direction, i.e., the $z$-coordinates.
By the asymptotic behaviour (\ref{intasymp}), it follows that the curvature term is positive and of growth $1/\rho^2$ as $\rho\rightarrow 0$. Thus for $\rho$ sufficient close to $0$, the curvature terms dominate the negativity coming from the weight function term. Hence the Basic Estimates (\ref{basicBPF}) is verified whenever $\rho$ is sufficiently close to $0$.

To construct a symmetric differential non-vanishing at the point $p$, first of all, let $\chi$ be a cutoff function supported in a small neighbourhood $U$ of the interior point $p$. Let $e$ be a holomorphic basis of the line subbundle $\mathcal{O}_U(1)\subset \mathcal{O}_{\mathbb{P}T_X}(1)|_U$.
Since we have verified the Basic Estimates (\ref{basicBPF}), by Theorem \ref{hor}, a solution to the equation
\[
\barD u = \barD (\chi e^\ell)
\]
exists and satisfies the estimates
\begin{equation}
\int_{\mathbb{P}T_X} \|u\|^2e^{-(4n-2)\psi}dV
\leq \frac{1}{\varepsilon}\int_{\mathbb{P}T_X} \|\barD (\chi e^\ell)\|^2e^{-(4n-2)\psi}dV<\infty,
\label{sectestBPF}
\end{equation}
where $dV=\frac{\omega^{2n-1}_{\mathbb{P}T_X}}{(2n-1)!}$.
Let $U\cong \Delta^{2n-1}(\epsilon)$ be a small neighbourhood of $p$ for some $0<\epsilon<1$, which is identified to the origin in $\Delta^{2n-1}(\epsilon)$. In the chart $\Delta^{2n-1}(\epsilon)$, for some constant $C>0$ and $dV_e$ the Euclidean volume on $\mathbb{C}^{2n-1}$, we have
\[
e^{-(4n-2)\psi}dV=C\frac{1}{|z|^{4n-2}}dV_e=C\frac{1}{r^{4n-2}} r^{4n-3}dS=\frac{C}{r}dS,
\]
where $r=|z|$ is the polar radius and $dS$ is the volume form of the unit sphere.
The last equality combined with the estimates (\ref{sectestBPF}) shows that $u$ must vanish at the point $p$ to order at least $1$.
Let
\[
\sigma:=\chi e^\ell-u
\]
Then $\barD\sigma=\barD({\chi e^\ell})-\barD u =0$ implies that $\sigma$ is holomorphic. Moreover,
$\sigma(p)=e^\ell(p)-u(p)=e^\ell(p)\neq 0$. 
Thus we have demonstrated the base point freeness of $H^0(\mathbb{P}T_{\overline{X}},\mathcal{O}_{\mathbb{P}T_{\overline{X}}}(\ell))$ in $\Omega$ for $\ell\geq n+2$.

To conclude, we have shown that $H^0(\mathbb{P}T_{\overline{X}},\mathcal{O}_{\mathbb{P}T_{\overline{X}}}(\ell))$ is base point free on $\mathbb{P}T_{\overline{X}}-\pi^{-1}(D)$ for $\ell\geq n+2$.

\subsubsection*{Separation of Tangents}
We need to show that for any $p\in \mathbb{P}T_X-\pi^{-1}(D)$, there is a holomorphic section $\sigma\in H^0(\mathbb{P}T_{\overline{X}},\mathcal{O}_{\mathbb{P}T_{\overline{X}}}(\ell))$ such that its differential $d\sigma_p\neq 0$. The  assertion essentially follows similar arguments as the proof in the base point freeness. 
If $p\in \Omega$ is a point near infinity, then the separation of tangents essentially follows from the Lefschetz's embedding theorem for abelian variety. Here we also only need to require $\ell\geq n+2$.
If $p\in \Omega'$ is a point in the interior part, then one consider a more singular weight function $4n\psi$ and start the construction with the local holomorphic section $z_i e^\ell$ for any coordinate function $z_i$. The required injectivity radius would be larger than that in the proof of base point freeness.

\subsubsection*{Separation of Points}
We need to show that for any pair of points $p,q\in \mathbb{P}T_X-\pi^{-1}(D)$, there are holomorphic sections $\sigma,\eta\in H^0(\mathbb{P}T_{\overline{X}},\mathcal{O}_{\mathbb{P}T_{\overline{X}}}(\ell))$ such that $\sigma(p)\neq 0, \sigma(q)=0$ and $\eta(p)=0,\eta(q)\neq 0$. There are three possibilities:
\begin{enumerate}
\item $p,q\in \Omega$ are points near infinity.
\item $p\in \Omega$ and $q\in \Omega'$.
\item $p,q\in \Omega'$ are both in the interior part.
\end{enumerate}

1) Suppose $p,q\in \Omega$. If $p$ and $q$ lie in two disjoint neighbourhoods (of sizes depending on injectivity radii), then the result follows from the construction in the proof of base point freeness applied to $p$ and $q$ separately. If both $p$ and $q$ lie in the same neighbourhood, then the assertion follows from the Lefschetz's embedding theorem for abelian variety. Here we may take any $\ell\geq n+2$.

2) Suppose $p\in \Omega$ and $q\in \Omega'$. Then we only need to apply the construction in the proof of base point freeness for $p$ and $q$ separately.

3) Suppose $p,q\in \Omega'$. Without loss of generality, assume $p$ is lifted to the origin $0$ in the universal cover $\mathbb{B}^n$ and $q$ some point $a\neq 0$ in $\mathbb{B}^n$. There is $\rho>0$ depending on the injectivity radius so that $B(0,1-\rho)$ corresponds to the metric ball neighbourhood of $p\in X$ of maximal size. 

If $a\notin B(0,1-\rho)$, then we can find two disjoint neighbourhood $B_0$ and $B_a$ (whose sizes also depend on the injectivity radii) of $0$ and $a$ respectively in $\mathbb{B}^n$ and apply the construction of the base point freeness to produce the desired sections. This would require a larger injectivity radius than the one in the original construction of sections for base point freeness of only one point because the radii of the neighbourhoods of $p$ and $q$ (or $0$ and $a$) cannot be taken to be the maximal ones.

If $a\in B(0,1-\rho)$, then instead of $\psi=\chi \log|z|$, we only need to consider another weight function
\[
\psi_1:= \chi (\log |z|+ \log |z-a|).
\]
The verification of the basic estimates and construction of section then follows similarly to the case of base point freeness (which also requires a possibly larger injectivty radius since $\psi_1$ would give two negative terms of same order of magnitude instead of when considering $\psi$).
\end{proof}

\begin{proof}[Proof of Corollary \ref{main2cor}]
It suffices to show that the injectivity radius of the interior part of $\overline {X}$ and the canonical radii of the cusps both increase upon lifting along a tower of coverings of $X$.
Write $\pi_1(X_k)=\Gamma_k$. Since $\{X_k\}$ is a tower of covering of $X$, we have $\Gamma_{k+1}\subset \Gamma_k$ for any $k\geq 0$.

Let $p\in \Omega'$ be a point in the interior compact part. We want to show that the injectivity radius of $p$ goes to infinity when lifted along the tower of coverings. Let $p_0$ be any lifting of $p$ to the universal cover $\mathbb{B}^n$ of $X$. By definition, the injectivity radius $d_k=d_k(p)$ is the largest radius so that the metric ball $B_g(p_0,d_k)$ (where $g$ is the Bergman metric of $\mathbb{B}^n$) injects into $X_k$ via the covering map $\mathbb{B}^n\rightarrow X_k$. This implies that 
\[
\{\gamma\in \Gamma_k \mid \gamma (\overline{B_g(p_0,d)})\cap \overline{B_g(p_0,d)}\neq \emptyset\}=\{id\}
\]
for any $d\leq d_k$. Let $r>0$ be a real number. Since $\Gamma_k$ is discrete, 
\[
|\{\gamma\in \Gamma_k \mid \gamma (\overline{B_g(p_0,r)})\cap \overline{B_g(p_0,r)}\neq \emptyset\}|<\infty.
\]
Note that $\Gamma_k\rightarrow \{\id\}$ as $k\rightarrow \infty$, there is $k_r\geq 0$ such that
\[
\{\gamma\in \Gamma_{k_r} \mid \gamma (\overline{B_g(p_0,r)})\cap \overline{B_g(p_0,r)}\neq \emptyset\}=\{id\}
\]
for any $k\geq k_r$. It follows that $d_k\geq r$ for any $k\geq k_r$. Since $r>0$ is arbitrary, the sequence $\{d_k\}$ must be increasing and $d_k\rightarrow \infty$ as $k\rightarrow \infty$.

To see that the canonical radii increase upon lifting along the tower of coverings, consider in particular a cusp $b$ of $\Gamma=\Gamma_0$, which lifts by the covering $X_k\rightarrow X$ to a cusp $b_k$ of $\Gamma_k\subset \Gamma_0=\Gamma$. 

Let $\Omega^{(u_b}\subset \overline{X}$ be the fundamental neighbourhood of the infinity $T_b$ in $\overline{X}$, where $\Omega^{(u_b)}\rightarrow T_b$ carries the disk bundle structure so that 
\[
r_0:=e^{-\frac{2\pi}{\tau_0}u_b}
\] 
is the canonical radius of $b$. Here $u_b>0$ is the canonical height of $b$ and $\tau_0>0$ is the generator of $\Gamma_{b,0}=\Gamma_0\cap [U_b,U_b]$
(recall that $U_b=\rad P_b$ is the unipotent radical of the parabolic subgroup $P_b\subset \Aut(\mathbb{B}^n)$ and $P_b$ is the isotropy subgroup of $b$ in $\Aut(\mathbb{B}^n)$). 
Let 
\[
\Gamma_{b,k}:=\Gamma_k\cap [U_b,U_b]\subset \Gamma_0\cap[U_b,U_b].
\]
Since $\Gamma_{b,0}$ is a one-parameter group generated by $\tau_0$, the normal subgroup $\Gamma_{b,k}$ is also a one parameter and is generated by some $\tau_k\geq \tau_0$.
Since $\Gamma_k\rightarrow \{\id\}$ as $k\rightarrow \infty$, we also have $\Gamma_{b,k}\rightarrow \{\id\}$ and as $k\rightarrow \infty$. It follows that $\{\tau_k\}$ is an increasing sequence of positive real numbers such that $\tau_k\rightarrow \infty$ as $k\rightarrow \infty$.

Now we identify $\mathbb{B}^n$ to the upper half space model $S$. Recall that the fundamental neighbourhood $\Omega^{(u_b)}$ of the infinity $T_b$ in $\overline{X}$ is obtained by taking interior closure in the punctured disk bundle structure of the horoball $B(u_b)/\Gamma_b$, where the canonical height $u_b$ is the smallest positive real number so that $B(u_b)$ is invariant under $\Gamma_b=\Gamma_{b,0}$.
The fundamental neighbourhood $\Omega^{(u_{b_k})}$ of the infinity $T_{b_k}$ in $\overline{X_k}$ is obtained by taking interior closure of the puncture disk bundle structure of the horoball $B(u_{b_k})/\Gamma_{b,k}$, where $u_{b_k}$ is the smallest positive real number so that $B(u_{b_k})$ is invariant under $\Gamma_{b,k}$. Then $\Gamma_{b,k}\subset \Gamma_{b,0}$ implies that $u_{b_k}\leq u_{b}$. Since $\Gamma_{b,k}\rightarrow \{\id\}$ as $k\rightarrow 0$, it follows that $\{u_{b_k}\}$ is a decreasing sequence of positive real numbers so that $u_{b_k}\rightarrow 0$ as $k\rightarrow \infty$.

The canonical radius of $b_k$ is given by
\[
r_k=e^{\frac{-2\pi }{\tau_k}u_{b_k}}.
\]
Thus $r_k$ is an increasing sequence and $r_k\rightarrow 1$ as $k\rightarrow \infty$.
\end{proof}

\subsubsection*{Acknowledgement}
This article contains material taken from the Ph.D thesis of the author, who would like to sincerely thank his advisor Ngaiming Mok for the continued guidance and support. The author would also like to thank the referee for valuable suggestions.
%%%%%%%%%%%%%%%%	End Main body	%%%%%%%%%%%%%%%%%%%%%%%%%%%%%%%%

\end{document}